\newtheorem{theorem}{Theorem}
\newtheorem{lemma}[theorem]{Lemma}
\newtheorem{cor}{Corollary}
\newtheorem*{claim*}{Claim}
\theoremstyle{definition}
\DeclarePairedDelimiter\floorsmall{\lfloor}{\rfloor}
\DeclarePairedDelimiter\ceilsmall{\lceil}{\rceil}
\newcommand{\floor}[1]{\floorsmall*{#1}}
\newcommand{\ceil}[1]{\ceilsmall*{#1}}
\newcommand{\lr}[1]{\left(#1\right)}
\renewcommand{\leq}{\leqslant}
\renewcommand{\geq}{\geqslant}
\newcommand{\x}{*1.8}
\newcommand{\y}{*2.4}
\title{\textbf{Anagram-free colourings of graph subdivisions}}
\author{Tim E. Wilson\quad David R. Wood\footnote{Research supported by Australian Research Council}\\
\\
School of Mathematical Sciences\\
Monash University\\
Melbourne, Australia\\
\texttt{\{timothy.e.wilson, david.wood\}@monash.edu}}
\begin{document}

	\maketitle
	
	An \emph{anagram} is a word of the form $WP$ where $W$ is a non-empty word and $P$ is a permutation of $W$. A vertex colouring of a graph is \textit{anagram-free} if no subpath of the graph is an anagram. Anagram-free graph colouring was independently introduced by Kam{\v{c}}ev, {\L}uczak and Sudakov 
	and ourselves
	. In this paper we introduce the study of anagram-free colourings of graph subdivisions. We show that every graph has an anagram-free $8$-colourable subdivision. The number of division vertices per edge is exponential in the number of edges. For trees, we construct anagram-free $10$-colourable subdivisions with fewer division vertices per edge. Conversely, we prove lower bounds, in terms of division vertices per edge, on the anagram-free chromatic number for subdivisions of the complete graph and subdivisions of complete trees of bounded degree.
	 
\section{Introduction}

An \emph{anagram} is a word of the form $WP$ where $W$ is a non-empty word and $P$ is a permutation of $W$. A vertex colouring of a graph is \emph{anagram-free} if the sequence of colours on every path in the graph is not an anagram. The \textit{anagram-free chromatic number}, $\phi(G)$, of a graph $G$, is the minimum number of colours in an anagram-free colouring of $G$. \citet*{alon2002nonrepetitive} proposed anagram-free colouring as a subject of study as a generalization of square-free colouring. A \textit{square} is a word of the form $WW$ where $W$ is a non-empty word. A graph colouring is \textit{square-free} if the sequence of colours on every path in the graph is not a square. A square-free graph colouring is also called a \textit{nonrepetitive colouring}. The \textit{square-free chromatic number}, $\pi(G)$, of a graph $G$, is the minimum number of colours in a square-free colouring of $G$. 

Square-free words and anagram-free words both originate from the study of the combinatorics of words. Square-free words are known as \textit{nonrepetitive} words and anagram-free words are known as \textit{abelian square-free} or \textit{strongly nonrepetitive}. Both types of word can be arbitrarily long with a bounded number of distinct symbols. In particular, \citet{thue1914probleme} constructed arbitrarily long square-free words on $3$ symbols. \citet{keranen1992abelian, keranen2009abelian} constructed arbitrarily long anagram-free words on $4$ symbols. The longest square-free or anagram-free words on two symbols have length $3$. The longest anagram-free words on three symbols have length $7$ \citep{cummings1996strongly}. Words are equivalent to colourings of paths, so $\pi(P) \leq 3$ and $\phi(P) \leq 4$ for all paths $P$.

Square-free colouring was introduced by \citet{alon2002nonrepetitive} and has since received much attention \cite{britten1971repetitive,grytczuk2007nonrepetitive, grytczuk2013new, grytczuk2006nonrepetitive, harant2012nonrepetitive, dujmovic2011nonrepetitive, brevsar2007nonrepetitivetree}. A central area of study has been to bound $\pi(G)$ by a function of maximum degree, $\Delta(G)$. \citet{alon2002nonrepetitive} proved a result that implies $\pi(G) \leq c\Delta(G)^2$ for some constant $c$. Several subsequent works improved the value of $c$ \cite{grytczuk2006nonrepetitive, harant2012nonrepetitive} with the best known value being $c = 1 + o(1)$ \cite{dujmovic2011nonrepetitive}. Lower bounds for square-free colouring apply to anagram-free colouring because $\phi(G) \geq \pi(G)$ for all graphs $G$. Indeed, a square is an anagram with the identity permutation, so for a colouring to be anagram-free it must also be square-free. Anagram-free colourings were recently introduced by \citet*{kamvcev2016anagram} and \citet{wilson2016abelian} both proving, among other results, that $\phi$ is not bounded by a function of maximum degree.

In this paper we study $\phi$ on graph subdivisions, with a focus on constructing subdivisions with bounded anagram-free chromatic number. A \textit{subdivision} of a graph, $G$, is a graph obtained from $G$ by replacing each edge $vw \in E(G)$ by a path with endpoints $vw$. If an edge $uv$ of $G$ is replaced by a path $uw_1w_2\ldots w_{i - 1}v$ of length $i$, then we say that $uv$ was \textit{subdivided $i$ times} and call the vertices $w_1,\ldots,w_{i-1}$ \textit{division vertices}. The \textit{$k$-subdivision} of $G$ is the subdivision in which every edge of $G$ is subdivided exactly $k$ times. Similarly, a \textit{$(\leq k)$-subdivision} of $G$ is a subdivision in which every edge of $G$ is subdivided at most $k$ times. Graphs with many division vertices are locally paths or stars, so one would expect highly subdivided graphs to  have relatively low have anagram-free chromatic number. Square-free colouring has been studied on subdivisions of graphs and here this intuition is known to hold. \citet{grytczuk2007nonrepetitive} showed that every graph has a subdivision, $S$, with $\pi(S) \leq 5$ with the bound later improved to $4$ by \citet{barat2008notes}, and finally to $3$ by \citet{pezarski2009non}.

Before introducing our results, we summarise the known results for $\pi$ and $\phi$ on trees. For a rooted tree, $T$, with root $r$, the \textit{depth} of a vertex $v$ in $T$ is the distance between $v$ and $r$. A \textit{$d$-ary tree} is a rooted tree with at most $d$ children per vertex. The \textit{complete $d$-ary tree of height $h$} is the rooted tree such that every non-leaf vertex has $d$ children and every leaf has depth $h$. The complete $2$-ary tree is called the \textit{complete binary tree}. \citet{brevsar2007nonrepetitivetree} studied square-free colourings of trees, showing that $\pi(T) \leq 4$ for every tree $T$, and that $T$ has a subdivision, $S$, with $\pi(S) \leq 3$. By contrast, $\phi$ is unbounded on trees \cite{wilson2016abelian, kamvcev2016anagram}. In particular, \citet{kamvcev2016anagram} prove the following bounds for the complete binary tree.
\begin{theorem}[\citet{kamvcev2016anagram}]\label{thm:completeBinaryTree}
	Let $T_h$ be the complete binary tree of height $h$. Then
	\begin{align*}
		\sqrt{\frac{h}{\log_2 h}} \leq \phi(T_h) \leq h + 1.
	\end{align*}
\end{theorem}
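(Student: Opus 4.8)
We prove the two inequalities separately; the upper bound is short and the lower bound is the substantial part.

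\smallskip
\noindent\emph{Upper bound $\phi(T_h)\le h+1$.} The plan is to colour each vertex of $T_h$ by its depth --- using the $h+1$ colours $0,1,\dots,h$ --- and to check this colouring is anagram-free. The key structural fact is that every path $Q$ in a tree is \emph{valley-shaped}: traversing $Q$ from one end to the other, the depth decreases by exactly $1$ at each step down to the unique vertex of $Q$ closest to the root (call it the apex of $Q$), and then increases by exactly $1$ at each step; so the colour sequence of $Q$, and of every subpath of $Q$, is a sequence of integers with $\pm1$ steps that is either strictly monotone or else strictly decreasing and then strictly increasing. I would then show no such sequence is an anagram $WP$. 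If the sequence is monotone, the colour sets of the two halves $W,P$ are disjoint consecutive integer intervals, so neither is a permutation of the other. If the apex is interior to the $2n$-vertex (sub)path, then one of $W,P$ avoids the apex and is therefore strictly monotone, hence has $n$ distinct colours; but the two colours flanking the apex are equal, so for the half containing the apex also to have $n$ distinct colours the apex must be the $n$th or the $(n{+}1)$th vertex, and then a one-line comparison of the extreme colours of $W$ and $P$ shows their multisets differ. Since this covers every subpath, the colouring is anagram-free.

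\smallskip
\noindent\emph{Lower bound $\phi(T_h)\ge\sqrt{h/\log_2 h}$.} Let $\chi$ be an anagram-free colouring of $T_h$ with $c$ colours; the plan is to show that $h$ cannot be much larger than $c^2\log_2 h$, which is equivalent to the stated bound. A single root-to-leaf path yields nothing here, since anagram-free words over a fixed alphabet can be arbitrarily long; the branching of $T_h$ is essential. I would fix a parameter $k$ of order roughly $c^2\log_2 c$, assume $h\ge k$, and consider the $2^k$ vertices at depth $k$. To such a vertex $u$ I would attach the vector in $\mathbb{Z}^c_{\ge0}$ counting the colours occurring on the root-to-$u$ path, together with a little extra data (such as the colour of $u$ and of its last few ancestors). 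There are at most $\binom{k+c-1}{c-1}$ times a small factor of possible attached values, which is far below $2^k$, so many depth-$k$ vertices share the same value; among them I would pick $u\neq u'$ whose lowest common ancestor $a$ is placed so that the colour multiset on the part of the $u$-path strictly below $a$ equals the colour multiset on the corresponding part of the $u'$-path. Using that anagram-freeness means no path contains two equal-length adjacent factors with the same colour multiset, I would then exhibit such a pair of factors inside the path that runs from $u$ up to $a$ and back down to $u'$ --- an anagram, contradicting anagram-freeness of $\chi$. Inverting the resulting bound on $k$ gives $\phi(T_h)\ge\sqrt{h/\log_2 h}$, the $\log_2 h$ coming from the $\binom{k+c-1}{c-1}$ term; small values of $h$ are handled directly.

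\smallskip
\noindent\emph{Main obstacle.} The subtle point is the final step of the lower bound. A bare collision of the colour-count vectors of $u$ and $u'$ only makes the two sides below $a$ have equal colour multisets \emph{as a whole}; when one tries to read an anagram off the path from $u$ through $a$ to $u'$ centred at $a$, the computation leaves a residual requirement that $\chi(a)$ coincide with $\chi(u)$ and $\chi(u')$, which a single collision need not satisfy. So the real work is to design the invariant attached to the depth-$k$ vertices, and to select the colliding pair within the subtree they span, so that a collision genuinely forces an anagram while the number of classes stays below $2^k$. Making the combinatorial gadget and this counting balance is precisely what fixes the $\sqrt{h/\log_2 h}$ exponent, and is where I expect most of the effort to lie.
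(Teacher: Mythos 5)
Your upper bound is correct and is exactly the paper's (the paper only remarks that colouring by depth works; your valley-shaped case analysis fills in the details soundly). The lower bound, however, has a genuine gap, and it is precisely the one you flag at the end: a collision of colour-count vectors between two depth-$k$ vertices $u,u'$ gives equal colour multisets on the two sides below their lowest common ancestor $a$, but to turn the $u$--$a$--$u'$ path into an anagram you must trade one endpoint for $a$, which requires $\chi(a)=\chi(u)=\chi(u')$. Appending $\chi(u)$ to the invariant handles the endpoints, but nothing in your setup controls $\chi(a)$, since $a$ is only determined \emph{after} the colliding pair is chosen. You correctly identify this as ``the real work'' but do not supply the gadget, so the proof is incomplete as written.

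The missing idea (Lemma~\ref{lemma:essentialMonochromatic} in the paper, following Kam{\v{c}}ev--{\L}uczak--Sudakov) is a preliminary Ramsey-type reduction: by induction on the colour classes one extracts from $T_h$ an \emph{essentially monochromatic} binary-branching subtree $S$ of effective height at least $h/x$ (where $x$ is the number of colours used), meaning every leaf and every branch vertex of $S$ gets the same colour. Only then does one run your counting argument on the $\geq 2^{h/x}$ root-to-leaf paths of $S$ versus the $\leq h^{x}$ possible colour multisets: now the lowest common ancestor of any colliding pair is automatically a branch vertex of $S$ and the two leaves are leaves of $S$, so all three share a colour and the anagram exists. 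Note also that this reduction is not just a technical patch --- it is where the second factor of $x$ comes from: the inequality to violate is $2^{h/x}>h^{x}$, i.e.\ $h>x^{2}\log_2 h$, which is what yields the exponent in $\sqrt{h/\log_2 h}$. Your sketch, which counts multisets over the full depth $k$ without the monochromatic reduction, would only produce one factor of $c$ (your ``$k$ of order $c^{2}\log_2 c$'' is not derived and does not match the target bound), so the gap affects not only the construction of the anagram but also the quantitative conclusion.
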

The upper bound, $\phi(T) \leq h + 1$, holds for every tree, $T$, of height $h$, and is obtained by colouring vertices by their depth. \citet{wilson2016abelian} show that this upper bound is almost best possible on general trees by proving that $\phi(T) \geq h$ where $T$ is the $(h - 1)^h$-ary tree of height $h$.

\subsection{Subdivisions of Trees}\label{sec:tree}

We now introduce the results in the present paper. Our results complement the bounds on $\phi$ for trees proved in \cite{wilson2016abelian, kamvcev2016anagram}. We construct anagram-free $8$-colourable subdivisions of binary trees.
\begin{theorem}\label{thm:sub_binaryTree}
	Every binary tree, $T$, of height $h$, has a $(\leq 3^{h - 1} - 1)$-subdivision, $S$, with $\phi(S) \leq 8$.
\end{theorem}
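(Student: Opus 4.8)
The plan is to build $S$ by an explicit, self‑similar subdivision and then to colour it with a palette split $4+4$ between the new \emph{division vertices} and the original \emph{branch vertices}. Concretely, subdivide the edge of $T$ joining depth $i-1$ to depth $i$ into a path of length $3^{h-i}$; since $i\geq 1$, every edge is replaced by a path with at most $3^{h-1}-1$ internal vertices, as required, and $S$ decomposes recursively as the root $r$, two paths of length $3^{h-1}$ joining $r$ to the roots of the (recursively subdivided) subtrees hanging below $r$. The crucial numerical property is that these lengths are \emph{superincreasing along every root–leaf path}: a subdivided edge at depth $i$ has length $3^{h-i}$, which strictly exceeds $2(3^{h-i-1}+3^{h-i-2}+\cdots+1)$, i.e.\ twice the combined length of all subdivided edges below it on any path. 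Colour the internal vertices of the depth‑$i$ edge by the length‑$(3^{h-i}-1)$ prefix of a fixed anagram‑free word over $\{1,2,3,4\}$ (which exists since $\phi(P)\leq 4$ for paths), read starting from the shallower endpoint, and colour each branch vertex at depth $d$ with colour $5+(d\bmod 4)$.

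To prove $\phi(S)\leq 8$ it suffices to show no path $Q$ of $S$ is an anagram. Two elementary facts carry most of the load: (a) if some colour occurs an odd number of times in the colour word of $Q$ then $Q$ is not an anagram, and more generally the two halves of an anagram have equal multisets of colours; and (b) every subword of an anagram‑free word is anagram‑free, so any $Q$ lying inside a single subdivided edge, or inside one subdivided edge together with an incident branch vertex, is harmless, since the two palettes are disjoint and each edge word is anagram‑free. So assume $Q$ traverses at least two subdivided edges fully. Project $Q$ to a path in $T$: it has a unique shallowest vertex $c$, of depth $d_c$ say, and the branch vertices of $Q$ have depths first decreasing to $d_c$ and then increasing. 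The one or two subdivided edges of $Q$ incident with $c$ lie at depth $d_c+1$ and are therefore the longest subdivided edges met by $Q$; combined with superincreasingness, $|Q|$ is less than $\frac{3}{2}$ times the combined length of these one or two longest edges, which forces the midpoint of $Q$ to fall in the interior of one of them. Hence one half of $Q$ contains far fewer branch vertices than the other. Their depths, reduced modulo $4$, form subintervals of an interval of integers, so comparing the two multisets is routine: unless they are residue‑balanced they differ in size or in residues, and then fact (a) produces a colour of unequal multiplicity and $Q$ is not an anagram.

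The main obstacle is the residual case in which the two halves of $Q$ do contain branch vertices at matching depths modulo $4$ — the prototype being a long, nearly monotone $Q$ whose midpoint splits a block of branch vertices at consecutive depths into two residue‑balanced halves. Here the branch colours give nothing, and one must show the division‑vertex colours cannot balance either: this is where anagram‑freeness of the fixed word, together with the exact values $3^{h-i}$ (not just superincreasingness, in order to pin down where the midpoint sits inside the long edge), has to be used to exhibit a division‑vertex colour of unequal multiplicity in the two halves, or a subword that would itself be an anagram. Making this step clean — possibly by additionally rotating the four division‑vertex colours according to depth so that edges at different depths cannot cancel — together with the bookkeeping for paths whose endpoints lie in edge interiors (reduced via (b) to paths between branch vertices), is the real content; everything else follows mechanically from the superincreasing construction and the two‑palette colouring.
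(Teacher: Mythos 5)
Your construction is genuinely different from the paper's, and unfortunately the difference is fatal rather than merely a gap in exposition. Because you colour the division vertices of every edge at a given depth by the \emph{same} prefix of the fixed word $W$, read from the shallower endpoint, and you colour original vertices purely by depth modulo $4$, the colouring of $S$ is invariant under the automorphism $\sigma$ that swaps the two subdivided subtrees hanging below a branch vertex $u$ (whenever those subtrees are isomorphic, e.g.\ in the complete binary tree of height at least $5$ with $u$ the root). This symmetry creates anagrams that no amount of further case analysis can remove. Concretely, let $u$ have depth $d$ and children $a,b$, let $p^-$ be a branch vertex at depth $d+4$ below $a$, let $p$ be the division vertex adjacent to $p^-$ on the $up^-$-path, and let $p'=\sigma(p)$ below $b$. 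With $D:=\operatorname{dist}(p^-,u)$ the path $Q$ from $p^-$ to $p'$ has exactly $2D$ vertices, and its split $LR$ satisfies $L=\{p^-\}\cup(p\cdots x_1)$ and $R=\{u\}\cup(y_1\cdots p')$, where $x_1,y_1$ are the division vertices adjacent to $u$ on the two sides. The segments $p\cdots x_1$ and $y_1\cdots p'$ are exchanged by $\sigma$, hence have equal colour multisets, and $f(p^-)=f(u)$ because their depths agree modulo $4$. So $M(L)=M(R)$ and $Q$ is an anagram. The paper's proof is built precisely to forbid this: the two child edges of each branch vertex receive \emph{different} labels from $\{1,2\}$, the label is part of the colour, and so sibling edges can never cancel against one another.

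Even setting the counterexample aside, your writeup explicitly leaves the decisive step open: you acknowledge that the residue-balanced case ``is the real content'' and only gesture at a fix (``rotating the four division-vertex colours according to depth''). The mechanism the paper actually uses is different in both respects where your construction is weak. First, the index into $W$ is cumulative along root-to-leaf paths (a vertex with label $\ell$ gets $w_x$ where $x$ counts the $\ell$-labelled vertices on its path to the root), not restarted per edge; this makes the restriction of any monotone path to a single label a contiguous subword of $W$, killing all paths that do not straddle a branch vertex in both subtrees. Second, for paths that do straddle a branch vertex $u$, the argument is a counting one on the two labels: the edge lengths $3^z$ force the midedge close to $u$, and then the label of the longer child edge occurs strictly more often in one half than the other. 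Depth residues of original vertices play no role. If you want to salvage your approach you must at minimum break the sibling symmetry of the division-vertex colouring, at which point you are essentially reconstructing the paper's labelling.
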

More generally, we construct anagram-free $10$-colourable subdivisions of $d$-ary trees.
\begin{theorem}\label{thm:colorDaryTree_simple}
	Every $d$-ary tree, $T$, of height $h$, has a $\lr{\leq 2d(d+1)^{h-1}}$-subdivision, $S$, with $\phi(S) \leq 10$.
\end{theorem}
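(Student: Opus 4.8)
The plan is to reduce to the complete $d$-ary tree, build a subdivision whose subdivided-edge lengths grow geometrically towards the root, equip it with a structured $10$-colouring, and verify anagram-freeness by induction on $h$ together with a case analysis on the shape of a path. For the reduction: every $d$-ary tree of height $h$ is a subgraph of the complete $d$-ary tree of height $h$, and $\phi$ cannot increase under taking subgraphs (a path in a subgraph is a path in the host, so a restricted colouring stays anagram-free), hence we may assume $T$ is complete. Root $T$, and for a vertex $v$ at depth $i\in\{1,\dots,h\}$ subdivide the edge from $v$ to its parent into a path of length $L_i:=2d(d+1)^{h-i}$. Every edge then becomes a path of length at most $L_1=2d(d+1)^{h-1}$, giving the claimed $(\le 2d(d+1)^{h-1})$-subdivision $S$. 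The crucial feature is the domination bound $L_i = d\sum_{j>i}L_j + 2d > d\sum_{j>i}L_j$: the subdivided edge above any vertex is strictly longer than every downward path below that vertex, even after accounting for the branching.

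\textbf{The colouring.} Split ten colours into three roles. Three \emph{level} colours record depth modulo $3$: every original vertex of depth $i$, together with a positive-density set of division vertices on each subdivided edge at level $i$, receives level colour $1+(i\bmod 3)$. Four \emph{filler} colours carry, along each subdivided edge, an anagram-free word — possible since $\phi(P)\le 4$ for paths $P$. Three \emph{address} colours encode, near each end of the subdivided edge above $v$, the index $k\in\{1,\dots,d\}$ of $v$ among its parent's children, in a block of length $O(d)\le L_i$ whose \emph{colour multiset} (not merely whose symbol sequence) determines $k$. One first checks, for each subdivided edge on its own, that the resulting colour word — level colours, filler and address blocks together — is anagram-free; this handles every path contained in a single subdivided edge, and all sufficiently short paths via the induction.

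\textbf{The induction.} For the inductive step, let $Q$ be a path of $S$ whose colour sequence is an anagram $WP$ with $|W|=|P|=n$; we seek a contradiction. The image of $Q$ in $T$ is monotone or bends upward at a common ancestor $w$. If this image avoids the root — either because it lies in a single subtree hanging off the root, or because $Q$ is short enough to sit inside such a subtree together with a bounded stretch of the incident subdivided edge — we finish by induction on $h$; the base $h=1$ is a subdivided star, where each leg carries an anagram-free word, any path through the centre is pinned to two legs whose address blocks differ, and no anagram occurs. Otherwise $Q$ runs through a neighbourhood of the root, so its one or two heaviest subdivided edges are at level $1$, and by the domination bound the rest of $Q$ has total length $O(L_1/d)$. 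Counting level colours then pins the midpoint of $Q$ to within an absolutely bounded distance of the centre of these level-$1$ edge(s): the level-$1$ colour occurs in the geometrically shorter deeper edges only $O(L_1/(d+1)^3)$ times, so a larger spill would unbalance it, and since a different level colour appears within $O(1)$ steps of every original vertex, no further spill is possible. In the monotone case this confines $Q$ to essentially a single subdivided edge, contradicting its anagram-freeness; in the bent case the two heavy edges are distinct children of the root, so their address blocks have different colour multisets and the address colours cannot balance between $W$ and $P$.

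\textbf{The main obstacle.} The delicate point is the quantitative pinning step. Because an anagram only has to balance multisets, a path can legitimately spill a bounded \emph{fraction} of its length off the heavy edge into geometrically shorter edges, and one must prove this spill cannot compensate for the colour imbalance it creates — which is precisely why the level colours must appear with fixed positive density (so the spill is governed by the ratio $(d+1)^3$ rather than merely being ``small''), why a different level colour must reappear within $O(1)$ steps of every original vertex, and why the address encoding must be injective on colour multisets rather than just on symbol sequences. Carrying these estimates through the monotone, bent, and through-the-root cases simultaneously is the bulk of the work. The count ten is $3+4+3$; for binary trees the address role needs only one extra colour and a binary-specific filler word saves another, yielding the eight colours of Theorem~\ref{thm:sub_binaryTree}.
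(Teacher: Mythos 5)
Your overall architecture matches the paper's (reduce to the complete $d$-ary tree, subdivide with lengths growing geometrically toward the root, split ten colours into roles, restrict to colour classes via the subword lemma, and case on whether the path bends at its minimum-depth vertex $u$). But there is a genuine gap in the one place where your construction departs from the paper: you give all $d$ sibling edges at a given depth the \emph{same} length $L_i$ and try to distinguish them by an ``address block'' of length $O(d)$ whose colour multiset encodes the child index. The problematic case is a path that bends at $u$ and descends far on both sides. The two sibling address blocks at level $i$ differ by at most $O(d)$ in each address colour, but each half of the path can traverse up to $h-i$ further edges, each carrying its own address block, so each half contains $\Theta(d(h-i))$ additional address-coloured vertices. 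Nothing in your pinning argument prevents these deeper blocks from exactly compensating the $O(d)$ discrepancy between the two top blocks; the level-colour pinning locates the midedge near $u$ but does not control how the address colours distribute over the two descending tails. So the claim that ``the address colours cannot balance between $W$ and $P$'' is unjustified as stated, and the bent-at-the-root case — the heart of the theorem — is open. (Your monotone and root-avoiding cases, and the reduction to the complete tree, are fine.)

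The paper closes exactly this hole by encoding the sibling index in the \emph{length} of the edge rather than in a bounded block: the edge to the $y$-th child at height $x$ receives $2t_{x,y}=2y(d+1)^{x-1}$ division vertices, so the identity $t_{x,1}=1+\sum_{i<x}t_{i,d}$ gives simultaneously $|D_\alpha|>b$ and $|D_\alpha|>|D_\beta|+b$, where $\alpha,\beta$ are the two largest edges hit (both incident to $u$) and $b$ bounds everything deeper. Each edge is split half ``red'' (parent side) and half ``green'' (child side), coloured by an anagram-free word on four symbols indexed by red-depth and green-depth, and the proof concludes by counting either green vertices in $L$ against $R$ or red vertices in $R$ against $L$, depending on which side of the split the whole of $D_\alpha$ falls. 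This makes the domination argument a two-line inequality and avoids any multiset-injective encoding. If you want to salvage your address-block idea you would need the block at level $i$ to dominate the total address content of all deeper levels, which forces the blocks to grow geometrically — at which point you have rediscovered the paper's length encoding.
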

The number of division vertices per edge is exponential in the height for both Theorem~\ref{thm:sub_binaryTree} and Theorem \ref{thm:colorDaryTree_simple}. This raises the question of whether better constructions exist. In particular, does every tree of bounded degree have an anagram-free $c$-colourable subdivision with the number of division vertices per edge growing slower than exponentially with height? We answer this question in the negative with the lower bound in the following theorem.
\begin{theorem}\label{thm:dary_sub_chrom}
	The $k$-subdivision, $S$, of the complete $d$-ary tree of height $h$ satisfies
	\begin{align*}
		\sqrt{\frac{h}{\log_{\min\{d, (h(k + 1))^2\}}(h(k + 1))}} \leq \phi(S) \leq \frac{10h}{\log_{d + 1}\lr{k/2d}} + 14.
	\end{align*}
\end{theorem}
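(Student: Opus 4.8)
The statement packages an upper bound and a lower bound, which I would prove by unrelated methods; I treat them in turn and then say where the difficulty lies.

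\textbf{The upper bound} is a ``blocked up'' version of Theorem~\ref{thm:colorDaryTree_simple}. Let $h'$ be the largest integer with $2d(d+1)^{h'-1}\leq k$, so $h'>\log_{d+1}\lr{k/2d}$. Cut the complete $d$-ary tree $T$ of height $h$ into $L:=\ceil{h/h'}$ horizontal \emph{strips} of at most $h'$ consecutive levels each: strip $1$ is the top complete $d$-ary tree of height $h'$, and for $j\geq 2$ every vertex of depth $(j-1)h'$ is the root of $d$ disjoint complete $d$-ary trees of height $\leq h'$ that together make up strip $j$; strips at consecutive levels share exactly one \emph{junction} vertex. Since $k\geq 2d(d+1)^{h'-1}$, (a suitable reading of the proof of) Theorem~\ref{thm:colorDaryTree_simple} gives an anagram-free $10$-colouring of the $k$-subdivision of each strip. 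Now assign each strip-level $j\in\{1,\dots,L\}$ a private $10$-colour palette $C_j$, with $C_1,\dots,C_L$ pairwise disjoint; colour the $k$-subdivision of each level-$j$ strip from $C_j$ by the colouring above; and whenever a junction vertex receives two colours (as a leaf of the strip above it and as a root of a strip below it), keep the one from the strip \emph{above}. This uses $10L$ colours, and $10L<\frac{10h}{\log_{d+1}\lr{k/2d}}+14$.

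\textbf{Anagram-freeness of this colouring} is then a short case analysis. A path $Q$ of $S$ projects to a path of $T$, so along $Q$ the depth first weakly decreases to a minimum at the ``apex'' $\alpha$ of $Q$ and then weakly increases, and hence the palette index of a vertex of $Q$ is weakly increasing in its distance from $\alpha$. Suppose a subpath $WP$ of $Q$ were an anagram. As $C_1,\dots,C_L$ are pairwise disjoint, $W$ and $P$ must contain, for each $j$, the same multiset of $C_j$-coloured vertices. Let $C_\mu$ be the lowest-indexed palette met by $WP$ and $\beta$ the shallowest vertex of $WP$. Because junctions were pushed to the strip above, the $C_\mu$-coloured vertices of $WP$ are either the single vertex $\beta$ (when $\beta$ is a junction) or a nonempty connected subsegment $\sigma$ of $WP$ lying inside a single strip $X$. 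If they are just $\beta$, then $\beta$ cannot be split evenly between $W$ and $P$. Otherwise: if the cut between $W$ and $P$ falls strictly inside $\sigma$ it is an abelian-square split of a subpath of the anagram-free colouring of $X$'s $k$-subdivision, which is impossible; if it falls outside $\sigma$ then $\sigma$ lies entirely in $W$ or entirely in $P$, so one of $W,P$ meets $C_\mu$ and the other does not (the degenerate case $\sigma=WP$ being excluded directly by anagram-freeness of $X$). In every case $WP$ is not an anagram, so the colouring is anagram-free.

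\textbf{The lower bound} generalises the profile-counting argument of Kam\v{c}ev, {\L}uczak and Sudakov behind Theorem~\ref{thm:completeBinaryTree}. Fix an anagram-free colouring of $S$ with $c:=\phi(S)$ colours, and let the \emph{profile} of a vertex $x$ record, for each colour, how many vertices of that colour lie on the root-to-$x$ path of $S$; since a leaf sits at depth $h(k+1)$, there are at most $(h(k+1)+2)^{c}$ profiles. A short multiset computation shows that if two leaves $a,b$ have equal profiles \emph{and} one of them has the same colour as their least common ancestor $m$, then the path from $a$ up to $m$ and down to $b$ contains an anagram, since its halves have the common length $h(k+1)-\operatorname{depth}(m)$. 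To force such a pair I would pigeonhole twice: first, among the $d^{h}$ leaves of $S$, extract a large family sharing one profile and one own-colour $q$; then, in the subtree spanned by that family, locate a branch vertex of colour $q$ below which the family splits among two children, and take $a,b$ there. Each step costs a factor $(h(k+1))^{O(c)}$ in the number of leaves needed, so the argument succeeds once $d^{h}>(h(k+1))^{\Omega(c^{2})}$; and since each step only uses $O((h(k+1))^{2})$ of the children below any branch vertex, $d$ can be replaced throughout by $\min\{d,(h(k+1))^{2}\}$. Solving the resulting inequality for $c$ and tracking constants gives the stated bound.

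\textbf{Where the difficulty is.} The upper bound is mostly bookkeeping once Theorem~\ref{thm:colorDaryTree_simple} is in hand; the one slightly delicate point is reconciling its ``$(\leq t)$-subdivision'' with the exact $k$-subdivision needed here, which I expect to come out of a second look at that proof. The real work is in the lower bound, and specifically in the second pigeonhole: finding many leaves with a common profile is not enough, one must also guarantee that their common ancestor carries the matching colour, and it is exactly this extra demand that turns the exponent $c$ into $c^{2}$ and so a linear bound into the square-root bound stated.
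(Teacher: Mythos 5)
Your upper bound is essentially the paper's: Corollary~\ref{cor:leq_dary_chrom} is proved by exactly your strip decomposition (delete the edges at a few prescribed depths, give each layer of resulting components its own private $10$-colour palette, and kill anagrams by restricting to the lowest-indexed palette met by the path). The one point you flag as delicate --- passing from a $(\leq k)$-subdivision to the exact $k$-subdivision --- is not resolved by ``a second look'' at the proof of Theorem~\ref{thm:colorDaryTree}, since that construction forces different edges to receive different numbers of division vertices and so can never be an exact $k$-subdivision. The paper closes this with Lemma~\ref{lem:sub_chrom_4}: the $k$-subdivision is a subdivision of any $(\leq k)$-subdivision, and further subdividing costs at most $4$ extra colours (the new division vertices induce a forest of paths, which you colour with an anagram-free word on $4$ fresh symbols). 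Those $4$ colours are exactly the slack left in your $+14$, so your accounting survives once that lemma is inserted.

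The lower bound has a genuine gap. Your second pigeonhole asks, given a large family of leaves sharing one profile and one colour $q$, to ``locate a branch vertex of colour $q$ below which the family splits.'' Nothing guarantees such a vertex exists: every branch vertex of the Steiner tree of that family may carry a colour other than $q$, and enlarging the family does not help, because the colours of internal vertices are adversarial and independent of the leaves' profiles. This is precisely the obstacle that the paper's Lemma~\ref{lemma:essentialMonochromatic} is built to remove, and the order of operations is the reverse of yours: \emph{before} any profile counting, one extracts by induction on $a_1+\dots+a_c$ a $d$-branch subtree of effective height $\floor{h/c}$ that is \emph{essentially monochromatic} (all of its branch vertices and leaves share one colour); only then does one count root-to-leaf paths in that subtree ($\geq d^{\floor{h/c}}$) against colour multisets ($\leq (h(k+1))^{c}$, the root colour being fixed) to find two paths with equal multisets, whose divergence vertex and leaf endpoints are then automatically colour-matched, yielding the anagram. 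Note also that this extraction does not cost ``a factor $(h(k+1))^{O(c)}$ in the number of leaves''; it divides the effective height by $c$, i.e.\ it acts in the exponent, and it is this division by $c$ combined with the $(h(k+1))^{c}$ multiset count that produces $c^{2}$ and hence the square root. As written, your lower-bound argument does not go through.
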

Theorem \ref{thm:dary_sub_chrom} implies that, for sufficiently large height $h$, the number of division vertices per edge in an anagram-free $c$-colourable subdivision of the complete $d$-ary tree is at least
\begin{align*}
	k \geq \frac{d^{h/c^2}}{h} - 1,
\end{align*}
which is exponential in $h$ for fixed $c$. The upper bound in Theorem \ref{thm:dary_sub_chrom} is obtained by applying Theorem \ref{thm:colorDaryTree_simple} to appropriate subtrees of the complete $d$-ary tree. The lower bound is a generalization of Theorem \ref{thm:completeBinaryTree}; see Theorem \ref{thm:effectiveDepthDegree} for details.

\subsection{Subdivisions of General Graphs}

We also study $\phi$ on subdivisions of general graphs, and prove the following theorems in this direction. The first has fewer division vertices per edge, while the second has fewer colours.
\begin{theorem}\label{thm:graphSub_constant_nice}
	Every graph $G$ has a $(\leq 3(2)^{2|E(G)| - 1} - 1)$-subdivision, $S$, with $\phi(S) \leq 14$.
\end{theorem}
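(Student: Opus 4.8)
The plan is a direct construction. Write $m=|E(G)|$ and fix an ordering $e_1,\dots,e_m$ of the edges of $G$. Let $\ell_i = 3\cdot 2^{2i-1}-1$, and let $S$ be the subdivision of $G$ in which $e_i$ is subdivided exactly $\ell_i$ times; then $S$ is a $(\le 3\cdot 2^{2m-1}-1)$-subdivision, as required. The design is driven by the fact that the lengths $\ell_i=\tfrac32 4^i-1$ are \emph{super-increasing} with a large margin, namely $\ell_i>2\sum_{j<i}\ell_j$ for every $i$, and also $\ell_i\ge i+1$. A path in a subdivision projects (by suppressing division vertices) to a path in the base graph, and a path uses each edge at most once; hence any sub-path of $S$ that meets the internal path of $e_i$ but of no edge of larger index meets at most $i-1$ other internal paths, and so at most $i+1$ original vertices of $G$. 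Consequently the internal path of $e_i$ accounts for all but a $\bigl(\sum_{j<i}\ell_j\bigr)+(i+1)\le\tfrac12\ell_i$ portion of the length of that sub-path. This domination of each edge over all lower-indexed edges (and over the few intervening original vertices) is what the whole argument runs on.

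For the colouring, partition the $14$ colours into a constant-size block of ``filler'' colours large enough to admit arbitrarily long anagram-free words (such words exist on four symbols, as recalled in the introduction; the remaining budget is deliberately generous so that the sub-palettes below stay pairwise disjoint), a block of ``delimiter/index'' colours, and a pair of ``vertex'' colours. Colour every original vertex of $G$ with a vertex colour. Colour the internal path of $e_i$ by a word $W_i=M_i\,K_i\,M_i'$ of length $\ell_i$, where $K_i$ is a long anagram-free filler word and $M_i,M_i'$ are short delimiter blocks that (i) insulate the filler palette from the vertex colours at the two ends of the path, and (ii) record the index $i$ in a way that is asymmetric under reversal --- for definiteness a unary block, which fits because $\ell_i\ge i+1$ --- so that a sub-block of $W_i$ and a sub-block of some $W_j$ or of its reverse can share a colour multiset only when forced into the same ``alignment''. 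Taking the $W_i$ from a self-similar family, so that $W_i$ literally contains several copies of $W_{i-1}$ (which also explains the factor-$4$ growth of $\ell_i$), would streamline the induction below but is not essential.

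To verify anagram-freeness, let $Q$ be a sub-path of $S$ whose colour sequence is a putative anagram $XY$ with $|X|=|Y|$ and $Y$ a permutation of $X$. As above, $Q$ projects to a path in $G$, so $Q$ meets the internal paths of edges with pairwise distinct indices, separated by original vertices; let $e_i$ be the one of largest index. I would split into cases according to how much of the internal path of $e_i$ lies in $Q$: (a) $Q$ stays inside that internal path (possibly touching its endpoints), so $XY$ is essentially a sub-word of $W_i$ and anagram-freeness of $K_i$, together with the insulating role of $M_i,M_i'$, gives a contradiction; (b) $Q$ leaves that internal path at exactly one endpoint into lower-indexed edges, in which case $\ell_i>2\sum_{j<i}\ell_j$ says the ``tail'' is too short to re-balance a long sub-word of $W_i$, and the index encoding finishes the case; (c) $Q$ contains the whole internal path of $e_i$, so $|XY|<\tfrac32\ell_i$, hence both $X$ and $Y$ contain long sub-words of $W_i$ with $M_i,M_i'$ at controlled positions, and the reversal-asymmetric index encoding forces $X$ and $Y$ to disagree on some colour count. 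In every case the lower-indexed edges and the intervening original vertices are absorbed as bounded noise by the length gap, and an induction on $i$ then rules out all anagrams.

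The step I expect to be the main obstacle is case (c), and more generally any window that straddles an original vertex, where a filler palette meets a vertex colour and the colour multiset of $X$ mixes all three sub-palettes. There one must choose the lengths, positions, and reversal behaviour of the delimiter/index blocks $M_i,M_i'$ precisely enough that ``$X$ and $Y$ have equal multisets'' forces the two halves into the same alignment, after which anagram-freeness of $K_i$ (or distinctness of the encoded indices) supplies the contradiction --- all within $14$ colours. Pinning down the delimiter design, and the exact super-increasing margin that makes every lower-indexed edge and every intervening original vertex genuinely negligible for all the relevant windows, is the delicate part; the division-vertex bound in the statement then comes for free from the choice $\ell_i=3\cdot 2^{2i-1}-1$.
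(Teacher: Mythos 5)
There is a genuine gap, and in fact one outright error. The error: you propose to encode the index $i$ in the delimiter blocks $M_i,M_i'$ as a unary block. Any two adjacent vertices with the same colour already form an anagram of length $2$ (take $W=P=$ that colour), so every anagram-free colouring must in particular be proper, and a unary block of length $\geq 2$ is immediately forbidden. So the index-encoding device on which your cases (b) and (c) rely cannot work as described, and you yourself flag that its replacement (``pinning down the delimiter design'') is the unproved delicate part. The deeper structural gap is in how you handle anagrams that only partially overlap the top-indexed edge: if the path $Q$ enters the internal path of $e_i$ by only a few vertices at one of its ends, then $e_i$ contributes almost nothing to the colour multiset, the super-increasing inequality $\ell_i>2\sum_{j<i}\ell_j$ gives no leverage, and you cannot simply delete those vertices and induct, because the anagram condition concerns all of $Q$. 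Your sketch does not contain a mechanism that forces some dominant block to lie \emph{entirely} inside $Q$.

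For comparison, the paper's proof is built precisely around these two difficulties. It first replaces $G$ by its $1$-subdivision so that the original vertices carry a proper $2$-colouring (this both kills length-$2$ anagrams among original vertices and forces any anagram to contain at least four of them, hence at least three consecutive subdivided edges). It then splits the interior of each subdivided edge into three equal segments $X(e),Y(e),Z(e)$, each family getting its own disjoint palette of four colours carrying an anagram-free word; since a path has only two ends, at most two of the three maximal-index segments $X(x),Y(y),Z(z)$ can be partially intersected, so at least one is wholly contained in $Q$, and the super-increasing lengths then pin the midedge inside that segment. Finally, the contradiction is extracted not from an index encoding but from a carefully chosen edge ordering (first by white endpoint, then by black endpoint), which is where the bipartite structure is used again. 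These are exactly the ingredients missing from your outline, so the proposal as it stands does not constitute a proof.
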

\begin{theorem}\label{thm:graphSub_constant_optimized}
	Every graph $G$ has a $\lr{\leq 45\lr{\frac{75}{9} + 1}^{2|E(G)| - 1}}$-subdivision, $S$, with $\phi(S) \leq 8$.
\end{theorem}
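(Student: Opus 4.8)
The strategy is to construct $S$ by processing the edges of $G$ one at a time, maintaining an anagram-free $8$-colouring throughout, so that accommodating each new edge multiplies the worst-case number of division vertices by only a fixed constant and introduces no new colours; this follows the same scheme as Theorem~\ref{thm:graphSub_constant_nice}, but with a more colour-efficient gadget that trades the base $2$ for the larger base $75/9+1$ in exchange for dropping from $14$ to $8$ colours. Fix an ordering $e_1,\dots,e_m$ of $E(G)$ (with $m=|E(G)|$) and let $G_i$ be the spanning subgraph with edges $e_1,\dots,e_i$; we may assume $G$ is connected, colouring the components separately and reusing colours otherwise. Inductively we will have a subdivision $S_i$ of $G_i$ and an anagram-free $8$-colouring of it in which every edge of $G_i$ is subdivided at most $L_i$ times, where the $L_i$ grow geometrically. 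Passing from $S_i$ to $S_{i+1}$ means replacing $e_{i+1}=uv$ by a long path between $u$ and $v$ whose interior we must colour compatibly.

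The colouring of a single subdivided edge is based on Ker\"anen's arbitrarily long anagram-free words on four symbols, which already guarantees that a path staying inside one subdivided edge is never an anagram; the remaining colours are reserved for ``buffering'' the junctions at original vertices and for a distinctive marking planted in the interior of each new subdivided edge. The crux is to rule out the two remaining kinds of anagrams: those whose path uses division vertices of $e_{i+1}$ together with part of $S_i$, and those whose path straddles an original vertex $w\in V(G)$ of degree at least two, entering along one subdivided edge and leaving along another. Here it is convenient to note that the colour sequence of any path in $S_{i+1}$ is a concatenation of (possibly reversed, possibly truncated) colour sequences of subdivided edges, indexed by a sequence of \emph{distinct} original edges forming a path in $G_{i+1}$, with truncation possible only in the first and last block. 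We then make the marking on $e_{i+1}$ --- say a run of a reserved colour, or a reserved colour occurring on $e_{i+1}$ far more often than on all of $S_i$ combined --- long enough that any path collecting it becomes too unbalanced in that colour to be split into two parts with equal colour multisets. Since an anagram $WP$ forces the colour multisets of $W$ and of $P$ to coincide, this defeats the dangerous paths, and the verification reduces to a finite case analysis over how a path can meet $e_{i+1}$ and the original vertices.

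Because the marking on $e_{i+1}$ must be longer than the whole of $S_i$, the recursion is of the form $L_{i+1}\le\alpha(L_i+O(1))$ with $\alpha=75/9+1$, and each edge consumes roughly two of these multiplicative steps --- one to extend the coloured structure to a pendant path reaching out from $u$, and one to close it back to $v$ --- so after all $m$ edges the worst subdivided edge has at most $45(75/9+1)^{2m-1}$ division vertices, as required.

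The main obstacle is the second paragraph. Abelian (multiset) equality is much weaker than word equality, so the ``incompatible-words'' arguments that suffice for nonrepetitive colourings of subdivisions do not transfer, and one genuinely needs the quantitative marking argument, correctly calibrated against every way a path can thread through a vertex of $G$ and through the new edge, to rule out abelian squares across the junctions. Keeping the constants small enough to reach exactly $8$ colours rather than merely ``some constant'' is the second delicate point, and is where the improved gadget over Theorem~\ref{thm:graphSub_constant_nice} is needed.
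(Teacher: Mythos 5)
Your overall strategy---subdivision lengths growing geometrically so that a reserved colour on the ``largest'' edge met by a path outweighs all earlier occurrences of that colour, Ker\"anen words inside each subdivided edge, and a proper $2$-colouring on the original vertices---is indeed the one the paper uses (packaged there as a ``discriminating colouring'' verified against Theorem~\ref{thm:graphSub_general}). But there is a genuine gap at the step where you claim that any path collecting the marking of the new edge ``becomes too unbalanced in that colour to be split into two parts with equal colour multisets.'' The imbalance argument does not, by itself, defeat such a path. If $P$ contains the entire marked region $M$ of the dominant edge and $M$ carries more occurrences of the reserved colour than the rest of $P$ does, all you may conclude is that both halves $L$ and $R$ of the split meet $M$, i.e.\ that the midedge of $P$ lies inside $M$; the reserved colour can then be split evenly between $L$ and $R$, and so can every other colour. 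This straddling case is exactly the hard one, and your ``finite case analysis'' gloss hides it. The paper's fix is structural: each subdivided edge is cut into three disjoint thirds $X(e)$, $Y(e)$, $Z(e)$ carrying three \emph{distinct} reserved colours, so that among the three extremal parts met by $P$ at least one is wholly contained in $P$ (a path has only two ends at which a part can be partially intersected); the majority condition then forces the unique midedge into that part, hence \emph{exactly} one part is wholly contained and the other two sit at the two ends of $P$, and the final contradiction is extracted from the edge ordering $\ell'$ (edges ordered first by the label of their white endpoint, with all white labels above all black labels). A single marked region per edge, or even two, leaves the straddling and partial-intersection cases alive. This three-part structure is also where the colour count $8 = 2 + 3 + 3$ actually comes from, not from ``buffering the junctions.''

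A second, smaller inaccuracy: the exponent $2|E(G)|-1$ does not arise from each edge ``consuming two multiplicative steps'' in an edge-by-edge induction. The paper first replaces the input graph by its $1$-subdivision, doubling the edge count, precisely to make the graph bipartite. The resulting proper $2$-colouring of the original vertices is what guarantees that any anagram containing an original vertex contains at least four of them (via Lemma~\ref{lem:anagramFreeSubColor}), hence wholly contains one marked third of some edge, and it also underpins the white/black ordering used in the final contradiction. Your sketch never secures bipartiteness, so even the opening move of the case analysis is unavailable as written.
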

The bound $\phi(S) \leq 8$ in Theorem \ref{thm:graphSub_constant_optimized} is our best bound on $\phi$, notably better than the bound for subdivisions of trees (Theorem \ref{thm:colorDaryTree_simple}). On the other hand, Theorem \ref{thm:colorDaryTree_simple} uses fewer division vertices. Indeed, if $T$ is the complete $d$-ary tree, then the number of division vertices per edge is polynomial in $|E(T)|$.

To investigate the optimality, in terms of division vertices per edge, of Theorem \ref{thm:graphSub_constant_nice} and Theorem \ref{thm:graphSub_constant_optimized}, we prove a lower bound on $\phi(K_n)$, the complete graph on $n$ vertices. Such results exist for $\pi$, in particular, \citet{nevsetvril2012characterisations} proved the following theorem.
\begin{theorem}[\citet{nevsetvril2012characterisations}]\label{thm:pi_K_n}
	For $k \geq 2$, the $k$-subdivision of $K_n$, denoted $S$, satisfies
	\begin{align*}
	\lr{\frac{n}{2}}^{1/(k + 1)} \leq \pi(S) \leq 9 \ceil{n^{1/(k + 1)}}.
	\end{align*}
\end{theorem}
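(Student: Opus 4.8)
The plan is to prove the upper bound by an explicit colouring and the lower bound by a pigeonhole/extremal argument; the two are independent.

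\smallskip\noindent\emph{Upper bound.} Put $q := \ceil{n^{1/(k+1)}}$, so $q^{k+1}\ge n$, and fix an injection $v\mapsto\phi(v)\in[q]^{k+1}$ assigning each vertex of $K_n$ an \emph{address}. Fix a total order on $V(K_n)$, and for an edge $uv$ with $u<v$ write its subdivision path in $S$ as $u=p_0,p_1,\dots,p_{k+1}=v$. I would colour $S$ with colours from $[q]\times\{1,2,3\}\times\{1,2,3\}$ — exactly $9\ceil{n^{1/(k+1)}}$ colours — where the colour of a vertex records in its first coordinate a letter of an address (the colours along the subdivision path of $uv$, together with the two endpoint colours, being arranged to encode enough of $\phi(u)$ and $\phi(v)$ that, along any path of $S$ that traverses full subdivision paths, one can recover the sequence of original vertices it passes through), in its second coordinate a symbol of a fixed square-free word over $\{1,2,3\}$ indexed by the position of the vertex along its subdivision path, and in its third coordinate a symbol of $\{1,2,3\}$ that depends only on the edge and serves to break the symmetry between its two orientations. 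One then checks that no path of $S$ reads a square, splitting on the period $|W|$ of a hypothetical square $WW$: when $|W|$ is small the square fits within a bounded window of subdivision paths and the second coordinate — a square-free word — is violated; when $|W|$ is large, each half of $WW$ contains a full subdivision path, the third coordinate aligns $WW$ to the edge structure, and then the first coordinate forces the two halves of $WW$ to visit the same original vertices of $K_n$, contradicting that a path of $S$ meets each original vertex at most once. Squares that begin or end partway along an edge reduce to these cases.

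\smallskip\noindent\emph{Lower bound.} Suppose $S$ is coloured square-freely with $c:=\pi(S)$ colours; I must show $c^{k+1}\ge n/2$. For distinct original vertices $x,y$ let $A(x\!\to\! y)\in[c]^k$ be the colour sequence on the division vertices of the edge $xy$ read from $x$ to $y$, so that $A(y\!\to\! x)$ is its reverse. The basic obstruction: for distinct original vertices $a,b,c$, the subpath of $S$ through them reads $\chi(a)\,A(a\!\to\! b)\,\chi(b)\,A(b\!\to\! c)\,\chi(c)$, which contains a square as soon as $A(a\!\to\! b)=A(b\!\to\! c)$ and $\chi(a)=\chi(b)$ (or $\chi(b)=\chi(c)$); slightly longer subpaths yield squares from repetitions of colour patterns across two or more consecutive edges. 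I would attach to each original vertex $v$ a signature of length at most $k+1$ over $[c]$, built from $\chi(v)$ and the division colours along a fixed set of edges at $v$, so that two vertices with equal signature can be joined by a path of $S$ — available because $K_n$ is complete — that reads one of these configurations, hence a square. The one subtlety is that a subdivision path can be traversed in either direction, so a colour word must be distinguished from its reverse; identifying a word with its reverse costs a factor of $2$, and pigeonhole over the $n$ signatures gives $n\le 2c^{k+1}$, i.e.\ $\pi(S)\ge(n/2)^{1/(k+1)}$.

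\smallskip\noindent\emph{Main difficulty.} Both directions turn on the same point: a path of $S$ may run through a subdivision path forwards or backwards, so colours can appear reversed. For the upper bound this is why a colouring by addresses alone fails and the auxiliary $\{1,2,3\}$-coordinates are needed; arranging these so the colour count stays within a constant factor of $q$, and then grinding through the period case-analysis over every way a square can sit on a path of $S$, is the main work. For the lower bound, pinning down a signature short enough to fit in $[c]^{k+1}$ yet always routable into a square, and extracting the exact constant $2$ while correctly handling reversed (in particular palindromic) colour words, is where I expect the effort to go.
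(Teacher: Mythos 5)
First, a point of order: the paper does not prove this statement. Theorem~\ref{thm:pi_K_n} is quoted from \citet{nevsetvril2012characterisations} and is used here only as a benchmark for Theorem~\ref{thm:K_n_anagram_lowerBound}, so there is no in-paper proof to compare yours against. Judged on its own, your upper-bound outline has the right shape (addresses in $[q]^{k+1}$ with $q=\ceil{n^{1/(k+1)}}$, an auxiliary square-free coordinate for short periods, an orientation-breaking coordinate, giving $9q$ colours), but the entire verification is deferred to ``one then checks,'' and the one concrete design decision you do mention is problematic: a subdivision path of $uv$ has only $k+2$ vertices, so it cannot carry ``enough of $\phi(u)$ and $\phi(v)$'' in any naive sense --- you must commit to encoding a single address per edge (e.g.\ that of the larger endpoint) and then the period case-analysis, which is the substance of the theorem, remains entirely undone.

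The lower bound has a genuine gap. Your square-producing configuration is correct: for consecutive original vertices $x,y,z$ the path reads $\chi(x)A(x\!\to\!y)\chi(y)A(y\!\to\!z)$, which is a square exactly when $(\chi(x),A(x\!\to\!y))=(\chi(y),A(y\!\to\!z))$. But this is a coincidence between two \emph{directed edges} sharing the vertex $y$, not between two vertices, and your plan to ``attach to each original vertex a signature of length at most $k+1$'' so that a signature collision between two vertices $u,v$ forces a square does not follow from it. Two vertices with equal colour and equal division words along some ``fixed set of edges'' cannot in general be ``joined by a path that reads one of these configurations'': the only edge joining them is $uv$ itself, whose division word is not at your disposal, and routing through a third vertex requires the head-to-tail alignment above, which your per-vertex signature does not control. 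Indeed, the head-to-tail condition alone is consistent with $\Theta(n^2)$ coincidences (e.g.\ an orientation in which one value occurs on all arcs from one large set to another produces no directed $2$-path), so some additional idea --- longer squares, or a counting over incident directed edge pairs rather than vertices --- is needed before any pigeonhole can yield $n\leq 2c^{k+1}$. As written, the step ``pigeonhole over the $n$ signatures gives $n\le 2c^{k+1}$'' is an assertion of the theorem rather than a proof of it.
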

Since $\phi(G) \geq \pi(G)$, the lower bound in Theorem \ref{thm:pi_K_n} implies that $k \geq \log_c \lr{n/2} - 1$ for every anagram-free $c$-colourable $k$-subdivision of $K_n$. We prove the following improvement.
\begin{theorem}\label{thm:K_n_anagram_lowerBound}
	Let $S$ be a $\lr{\leq k}$-subdivision of $K_n$. If $S$ is anagram-free $c$-colourable then
	\begin{align*}
		k \geq \lr{c!\lr{\frac{n}{c} - 1}}^{1/c} - c.
	\end{align*}
\end{theorem}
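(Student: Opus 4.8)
The plan is to set up a counting argument that compares the number of distinct "colour patterns" available on subdivided edges with the number of edges that must be distinguished. Suppose $S$ is a $(\leq k)$-subdivision of $K_n$ with an anagram-free $c$-colouring. First I would restrict attention to a large set of vertices of $K_n$ that receive the same colour: by pigeonhole, among the $n$ branch vertices there are at least $n/c$ of them, say a set $U$, all coloured identically. This homogeneity is what forces the subdivided paths between pairs in $U$ to carry a lot of distinguishing information, since an anagram-free colouring cannot tolerate two paths whose multiset of interior colours is "too compatible" when glued at a common endpoint.

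The key step is to extract, for each vertex $u \in U$, the sequence of colours along the (at most $k$) division vertices on a fixed edge incident to $u$ — more carefully, I would look at the path in $S$ between two vertices of $U$ and consider the multiset (or the ordered sequence) of colours on one half of it. The anagram-free condition says: if we take the path from $u$ to $v$ (both in $U$, hence same colour) through a common neighbour or directly, and the colour multiset on one side equals a permutation of the colour multiset on the other side, we get an anagram, a contradiction. So the colour sequences of length at most $k$ on the division vertices, read as multisets over a $c$-element alphabet, must all be distinct across the relevant family of paths. The number of multisets of size at most $k$ over $c$ colours is $\binom{k+c}{c}$, but to get the stated bound $k \geq \bigl(c!(n/c - 1)\bigr)^{1/c} - c$ I would instead count ordered sequences or use the sharper estimate $\binom{k+c}{c} \geq \frac{(k+c)^c}{c!}$ rearranged; concretely, I expect the argument shows $|U| - 1 \leq \binom{k+c}{c}$, i.e. $\frac{n}{c} - 1 \leq \frac{(k+c)^c}{c!}$, and solving for $k$ gives the theorem. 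The factor $c!$ and the shift by $c$ strongly suggest exactly the identity $\binom{k+c}{c} \le \frac{(k+c)^c}{c!}$ is what is being inverted.

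The main obstacle is making precise which paths in $S$ get compared and why their interior colour multisets are forced to be pairwise distinct — that is, ruling out anagrams robustly. A single subdivided edge $uv$ gives a path whose two endpoints have the same colour (both in $U$); if the interior colour sequence is itself an anagram, or if one edge's interior is a permutation of a sub-block of another, we must derive a contradiction, and handling the "$\leq k$" (variable subdivision lengths) rather than exactly $k$ requires care about comparing sequences of different lengths. I would handle this by padding or by choosing the comparison so that the relevant blocks have equal length, or by arguing directly that equality of multisets on two edge-interiors sharing an endpoint in $U$ produces an anagram on the concatenated path. Once that combinatorial core is nailed down, the counting and the final algebraic rearrangement to isolate $k$ are routine.
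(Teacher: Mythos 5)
Your proposal matches the paper's proof: pigeonhole on vertex colours gives a monochromatic clique on $\lceil n/c\rceil$ vertices, each edge is assigned the multiset of its division-vertex colours, two equal-multiset edges sharing a clique vertex concatenate (endpoint, interior, endpoint, interior) into an anagram, and counting multisets via $\binom{k+c}{c}\le (k+c)^c/c!$ yields exactly your inequality $\frac{n}{c}-1\le \frac{(k+c)^c}{c!}$. The length-mismatch worry you raise about the $(\leq k)$-subdivision is vacuous: equal multisets have equal size, so the two halves of the concatenated path automatically have the same length, and no padding is needed.
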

For fixed $c$, the bound in Theorem \ref{thm:K_n_anagram_lowerBound} is $k \geq \Omega\lr{n^{1/c}}$, which is larger than the logarithmic bound implied by Theorem \ref{thm:pi_K_n}. Still, this lower bound is much less than the exponential upper bound implied by Theorem \ref{thm:graphSub_constant_nice}. We expect that both our upper and lower bounds on $k$ can be significantly improved.

\section{Basic Observations}

This section contains basic observations and definitions that will be used throughout the rest of the paper. A \textit{colour multiset} of size $n$ on $c$ colours is a multiset of size $n$ with entries from $[c] := \{1,2,\ldots,c\}$. Let $\mathcal{M}_{n,c}$ be the set of all colour multisets of size $n$ on $c$ colours, and let $\mathcal{M}_{\leq n,c}$ be the set of all colour multisets of size at most $n$ on $c$ colours. For a coloured graph $G$ define the following. Let $M(G)$ be the multiset of colours assigned to the vertices of $G$. For a subset, $C$, of the colours, let $M_C(G)$ be $M(G)$ restricted to $C$. Let $V_C(G)$ be the vertices of $G$ that have a colour from $C$. 

Call a path \textit{even} if it has an even number of vertices. Define $LR$ to be the \textit{split} of an even path, $P$, if $|L| = |R|$ and $P = LR$. Note that a coloured path, $P$, is an anagram if and only if $M(L) = M(R)$. Equivalently, $P$ is not an anagram if $M_C(L) \neq M_C(R)$ for some set of colours $C$. For a path $P$ and set of colours $C$, define \textit{$P$ restricted to $C$} to be the word $\omega_C(P) := f(v_1)f(v_2)\ldots f(v_x)$, where $v_1,v_2,\ldots,v_x$ are the vertices in $V_C(P)$, in the order defined by $P$, and $f$ is the vertex colouring of $P$. Similarly, for a word $W = w_1w_2\dots w_n$ and set of symbols $C$, define \textit{$W$ restricted to $C$} to be $\omega_C(W) := f(w_1)f(w_2)\dots f(w_n)$, where $f(w) = w$ if $w \in C$ and $f(w)$ is the empty character otherwise. We use these observations in the form of the following lemma.
\begin{lemma}\label{lem:anagramFreeSubColor}
	A path, $P$, coloured by $C$, is an anagram if and only if for all $C' \subseteq C$, $P$ restricted to $C'$ is an anagram or the empty word.
\end{lemma}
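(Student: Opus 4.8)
The plan is to prove the two implications separately, leaning on the observations recorded just before the lemma: that restriction distributes over concatenation, $\omega_{C'}(UV)=\omega_{C'}(U)\,\omega_{C'}(V)$, and that an even coloured path with split $LR$ is an anagram exactly when $M(L)=M(R)$.

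For the \emph{only if} direction, suppose the colour word of $P$ is an anagram; then $P$ is even, say with split $LR$, and $M(L)=M(R)$. Fix $C'\subseteq C$. Restricting this multiset identity to the colours in $C'$ gives $M_{C'}(L)=M_{C'}(R)$, so $\omega_{C'}(L)$ and $\omega_{C'}(R)$ have the same length and $\omega_{C'}(R)$ is a permutation of $\omega_{C'}(L)$. Hence $\omega_{C'}(P)=\omega_{C'}(L)\,\omega_{C'}(R)$ is either the empty word (when $\omega_{C'}(L)$ is empty) or a word $WP'$ with $W=\omega_{C'}(L)$ non-empty and $P'$ a permutation of $W$, i.e.\ an anagram. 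So $P$ restricted to $C'$ is an anagram or the empty word, as claimed.

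For the \emph{if} direction I would simply instantiate the hypothesis at $C'=C$. Since $P$ is coloured by $C$, every vertex of $P$ belongs to $V_C(P)$, so $P$ restricted to $C$ is literally the colour word of $P$; being non-empty, it is not the empty word, and therefore by hypothesis it is an anagram, i.e.\ $P$ is an anagram.

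I do not anticipate a genuine obstacle: the first direction is the standard fact that deleting symbols preserves the ``balanced'' property of an anagram, and the second is immediate by taking $C'=C$. The only points requiring a little care are that when $|\omega_{C'}(L)|=|\omega_{C'}(R)|$ the factorisation $\omega_{C'}(L)\,\omega_{C'}(R)$ really is the split of $\omega_{C'}(P)$ (so that the permutation condition does exhibit it as an anagram), and that the ``empty word'' alternative must be retained in the statement, since it genuinely occurs whenever $C'$ misses every colour appearing on $P$.
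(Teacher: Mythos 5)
Your proposal is correct and follows essentially the same route as the paper: for the forward direction, restrict the multiset identity $M(L)=M(R)$ on the split $LR$ of $P$ to the colours in $C'$ and observe that $\omega_{C'}(L)\,\omega_{C'}(R)$ is the split of $\omega_{C'}(P)$; for the converse, instantiate at $C'=C$. Your added remark that the equal lengths $|\omega_{C'}(L)|=|\omega_{C'}(R)|$ guarantee this factorisation really is the split is a sensible point of care that the paper leaves implicit.
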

\begin{proof}
	We first prove the forward implication. Let $C' \subseteq C$ be such that $\omega_{C'}(P)$ is nonempty, since the empty case is trivial. Let $LR$ be the split of $P$. Note that $M_{C'}(L) = M(\omega_{C'}(L))$ and $M_{C'}(R) = M(\omega_{C'}(R))$. Since $P$ is an anagram,
	\begin{align*}
		M(\omega_{C'}(L)) = M_{C'}(L) = M_{C'}(R) = M(\omega_{C'}(R)).
	\end{align*}
	Therefore $\omega_{C'}(P)$ is an anagram.

	To prove the back implication take $C' = C$. Then $P$ restricted to $C'$, which is all of $P$, is an anagram.
\end{proof}
The \textit{midedge} of an even path $P$ with split $LR$ is the edge of $P$ not contained in $L$ or $R$. For a connected graph $G$, define the \textit{distance} between an edge $ab$ and a vertex $v$ to be the minimum of $\text{dist}(a,x)$ and $\text{dist}(b,x)$. 

\section{Subdivisions of trees}

This section contains our results for trees. For every vertex $v$ in a rooted tree $T$, define $A_T(v)$ to be the set of ancestors and descendants of $v$ in $T$. A \textit{branch vertex} is a vertex of a rooted tree with at least two children.

\subsection{Subdivisions of binary trees}

\begingroup
\def\thetheorem{\ref{thm:sub_binaryTree}}
\begin{theorem}
	Every binary tree, $T$, of height $h$, has a $(\leq 3^{h - 1} - 1)$-subdivision, $S$, with $\phi(S) \leq 8$.
\end{theorem}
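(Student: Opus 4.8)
The plan is to design the subdivision recursively on the height $h$, assigning to each edge of $T$ a number of division vertices that depends on the depth of that edge, and then to exhibit an explicit $8$-colouring built from a small palette that splits into functionally distinct groups. A natural split is to reserve a fixed set of ``structural'' colours (say $2$ or $3$ colours) for the original branch vertices and the top of each subdivided edge, and to use the remaining colours to colour the long runs of division vertices using an anagram-free word on paths — recall from the introduction that paths have $\phi \le 4$, so a $4$-colour word suffices on each individual subdivided edge. The key idea is that along any path of $S$, the ``structural'' colours occur only at a bounded, controlled set of positions (essentially at images of branch vertices), so by Lemma~\ref{lem:anagramFreeSubColor} it is enough to (i) rule out anagrams in the restriction to the structural colour set, using that those colours appear sparsely and are forced to be unbalanced across any split whose midedge sits in a long subdivided segment, and (ii) rule out anagrams in the restriction to the path-colour set, using that each subdivided edge is coloured by an anagram-free word and that distinct edges along a root-to-leaf path have geometrically increasing lengths, so no split can line up two such words into an anagram.

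Concretely, I would root $T$, let edge $e$ at depth $i$ (i.e.\ whose lower endpoint has depth $i+1$) receive roughly $3^{h-1-i}$ division vertices so that the total number of division vertices per edge is at most $3^{h-1}-1$, and verify the bound matches the geometric sum $\sum_{j} 3^j$. Then I would set up the colouring: colour original vertices and a fixed-length ``marker block'' at each end of each subdivided path using the structural colours in a rigid pattern (so that a structural-restricted path looks like a sequence of marker blocks separated by empty space), and fill the interior of each subdivided path with a Ker\"anen-type anagram-free word on the remaining colours, choosing the word on each edge so that it is ``aperiodic relative to its neighbours'' — e.g.\ by making the length of the word on a parent edge more than twice that on any child edge, which is exactly what the $3^{h-1-i}$ schedule buys us.

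The main case analysis is over an arbitrary even subpath $P$ of $S$ with split $LR$ and midedge $m$. I would split into cases according to where $m$ lies: if $m$ is inside a single subdivided edge's interior word, then $P$ restricted to the path-colours would have to be an anagram, contradicting that the word on that edge is anagram-free (after accounting for the possibility that $P$ spills over into adjacent edges — here the length/geometry condition prevents the spillover words from ``completing'' an anagram). If $m$ lies in or near a marker block, then I compare the structural-colour restriction of $L$ and $R$: the marker blocks are short and their multiset content near $m$ is unbalanced, so $M_C(L) \ne M_C(R)$ and we again conclude by Lemma~\ref{lem:anagramFreeSubColor}. Finally, if $P$ straddles a branch vertex, the two ``sides'' of $P$ come from sibling subtrees; here I use $A_T(v)$ and the branch-vertex structure to argue that the two sides cannot be reflections of each other because their edge-length profiles differ or the branch vertex's own colour is unmatched.

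I expect the hard part to be case (i): controlling what happens when the split $LR$ of $P$ puts the two halves into \emph{different} subdivided edges so that neither half is governed by a single anagram-free word. Getting the division-vertex schedule and the marker blocks to interact correctly — so that every such straddling split is provably unbalanced in \emph{some} colour subset — is the delicate point, and it is presumably why the construction needs $8$ colours (structural plus path palette) rather than the $4$ that suffice for a single path, and why the exponential $3^{h-1}-1$ bound on division vertices is essentially forced by the need for geometric length separation between parent and child edges.
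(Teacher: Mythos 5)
Your high-level skeleton matches the paper's: the same geometric schedule (an edge at depth $i$ gets about $3^{h-1-i}$ division vertices, so each edge gets at most $3^{h-1}-1$), an appeal to Lemma~\ref{lem:anagramFreeSubColor} via restrictions to colour subsets, and a case split on whether the path straddles a branch vertex. But the colouring you propose has a genuine gap. You colour the interior of each subdivided edge with its \emph{own} anagram-free word on a shared $4$-colour palette. Individually anagram-free words do not concatenate to anagram-free words: if one edge's word ends in $abc$ and the next edge's word begins with $cba$, then a path crossing the boundary, restricted to the path palette, reads $abccba$, an anagram. By Lemma~\ref{lem:anagramFreeSubColor} you would then have to kill every such boundary-crossing path using the structural colours alone, and your marker blocks are not specified precisely enough to guarantee that the structural restriction of every such path is a non-empty non-anagram. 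The geometric length separation does not rescue this, because a short path can use only the tail of one word and the head of the next. The paper avoids the issue with a different device: it $2$-labels the edges of $T$ so that sibling edges at each branch vertex receive distinct labels, fixes a \emph{single global} anagram-free word $W$ on $\{1,2,3,4\}$, and colours a vertex $v$ by the pair $(\ell, w_x)$, where $\ell$ is its label and $x$ is the number of $\ell$-labelled vertices on the path from $v$ to the root. The restriction of any ancestor--descendant path to one label is then a \emph{contiguous subword} of $W$, hence automatically not an anagram --- no marker blocks or per-edge words are needed, and this accounts for the $8 = 2 \times 4$ colours.

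The second gap is the branch-vertex case, which you dispatch with ``their edge-length profiles differ or the branch vertex's own colour is unmatched.'' That is the case requiring the quantitative work. The paper's argument is a counting estimate: since the sibling edges below the minimum-depth vertex $u$ carry different labels and subdivision lengths grow geometrically toward the root, the midedge of any anagram candidate is forced to lie within roughly $\tfrac{1}{4}3^{z}$ of $u$ (where $3^{z}+1$ is the order of the $ux$-path), and then the number of label-$1$ vertices in the half containing the long child edge strictly exceeds that in the other half (at least $\tfrac{3}{4}3^{z}+\tfrac{3}{4}$ versus at most $\tfrac{3}{4}3^{z}-\tfrac{5}{4}$). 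You would need to supply an inequality of this kind; observing qualitatively that the profiles ``differ'' does not show that some colour class is unbalanced across the split.
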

\addtocounter{theorem}{-1}
\endgroup

\begin{proof}
	$2$-colour the edges of $T$ with $\{1, 2\}$ such that for every branch vertex, $v \in V(T)$, with children $c_1$ and $c_2$, the edge $vc_i$ receives colour $i$. Colour the remaining edges arbitrarily from $\{1, 2\}$. Let $S$ be the subdivision of $T$ such that edges at distance $x$ from the root are subdivided $3^{h - x - 1} - 1$ times. Note that edges incident with leaves of depth $h$ are not subdivided.
	
	Let $r$ be the root of $S$. Label the vertices of $S$ according to the edge $2$-colouring of $T$ as follows:
	\begin{itemize}
		\item Label division vertices with the colour of the corresponding edge in $T$.
		\item Label $r$ with $1$.
		\item Label the original non-root vertices with the label of their parent edge in $T$.
	\end{itemize}
	
	Let $W = w_1w_2\ldots$ be an anagram-free word on $\{1,2,3,4\}$.  Define $V_\ell(S)$ to be the set of vertices with label $\ell$ in $S$. Colour every vertex $v \in V(S)$ by $(\ell,w_x)$ where $\ell$ is the label of $v$ and $x$ is the number of vertices with label $\ell$ on the $vr$-path. We now show that this $8$-colouring of $S$ is anagram-free.
	
	Let $P$ be an even order path in $S$. Consider the case where there is some $\ell \in \{1,2\}$ such that $A_S(v) = V_\ell(P)$ for all vertices $v \in V_\ell(P)$. If $V_\ell(P) = \emptyset$ then $P$ is not an anagram because, by construction, $S$ restricted to a label is anagram-free. So now consider $V_\ell(P) \neq \emptyset$ and let $C' = \{\ell\}\times\{1,2,3,4\}$. Then $\omega_{C'}(P) = (\ell, w_y)(\ell, w_{y + 1})\ldots(\ell, w_{y + |V_\ell(P)|})$, for some integer $y$, because the number of $\ell$ labelled vertices on the $vr$-path increments by $1$ for all vertices $v \in V(P_\ell)$ along $P$. Therefore $\omega_{C'}(P)$ is a subword of $W$ so, by Lemma \ref{lem:anagramFreeSubColor}, $P$ is not an anagram.

	Now consider the case where for every $\ell \in \{1,2\}$ there exists a $v \in V_\ell(P)$ such that $A_S(v) \neq V_\ell(P)$. Let $u$ be the minimum depth vertex in $V(P)$. Both labels have vertices that are not mutual ancestors or descendants so $u$ has two children in $T$, $x,y \in V(T)$, and, in addition, $x, y \in V(P)$.
	
	Partition $V(P)$ into $X := \lr{V(P) \cap A_S(x)} \setminus \{u\}$ and $Y := V(P) \cap A_S(y)$. Let $LR$ be the split of $P$ such that, without loss of generality, $u \in V(R)$. Also without loss of generality, choose $x$ and $y$ such that $Y \subseteq V(R)$. Note that $Y \cap V(L) = \emptyset$ so
	\begin{align*}
	|X \cap V(L)| = |V(L)| = |V(R)| = |X \cap V(R)|  + |Y \cap V(R)|.
	\end{align*}
	
	Let $z$ be the integer such that $3^z + 1$ is the order of the $ux$-path in $S$. We will prove an upper bound on $|X \cap V(R)|$ to show that the midedge of $P$ is `close' to $u$. First, note that 
	\begin{align*}
	|Y \cap V(R)| \geq 3^z + 2
	\end{align*}
	because the edge $uy$ was subdivided $3^z - 1$ times. $|X|$ is at most the length of a path from $u$ to a leaf so
	\begin{align*}
	|X \cap V(L)| &\leq 3^z + 3^{z-1} + \dots + 3^1 + 1 - |X \cap V(R)| = \frac{3}{2}3^z -\frac{1}{2} - |X \cap V(R)|.
	\end{align*}
	Therefore
	\begin{align*}
	|X \cap V(R)| &= |X \cap V(L)| - |Y \cap V(R)| \leq \frac{3}{2}3^z -\frac{1}{2} - |X \cap V(R)| -3^z - 2.
	\end{align*}
	Thus
	\begin{align*}
	|X \cap V(R)| &\leq \frac{1}{4}3^z -\frac{3}{4}.
	\end{align*}
	Without loss of generality let $x$ have label $1$ and $y$ have label $2$. Indeed, the labels of $x$ and $y$ are distinct because edges $ux$ and $uy$ have different colours in $T$. All vertices on the $ux$-path (except possibly $u$) have label $1$ so
	\begin{align*}
	|V_1(L)| &\geq 3^z - |X \cap V(R)| \geq  \frac{3}{4}3^z  + \frac{3}{4}.
	\end{align*}
	To put an upper bound on $|V_1(R)|$ assume the worse case, that $u$ has label $1$. Then
	\begin{align*}
	|V_1(R)| &\leq |X \cap V_1(R)| + 1 +  3^{z-1} + 3^{z-2} + \dots + 3^1 + 1\\
	&\leq \frac{1}{4}3^z -\frac{3}{4} + 1 + \frac{3}{2}3^{z-1} -\frac{1}{2} \\
	&= \frac{3}{4}3^z - \frac{5}{4}.
	\end{align*}
	It follows that $|V_1(R)| < |V_1(L)|$. Therefore $P$ is not an anagram.
\end{proof}

\subsection{Subdivisions of $d$-ary trees}

The construction in Theorem \ref{thm:sub_binaryTree} does not extend to a good bound on $\phi$ for subdivisions of complete $d$-ary trees. The obvious extension, using $d$ labels for the edge colouring, shows that the complete $d$-ary tree has a $4d$-colourable subdivision. We prove the following result for complete $d$-ary trees.
\begin{theorem}\label{thm:colorDaryTree}
	The complete $d$-ary tree, $T$, of height $h$, has a  $\lr{\leq 2d(d+1)^{h-1}}$-subdivision, $S$, with $\phi(S) \leq 10$.
\end{theorem}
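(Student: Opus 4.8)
The plan is to mimic the structure of the proof of Theorem~\ref{thm:sub_binaryTree}, but replace the simple two-label scheme (which forced the two children of a branch vertex into two distinct label-classes) with a scheme that can handle $d$ children while still using only a bounded number of labels. The key idea is that at a branch vertex $v$ with children $c_1,\dots,c_d$ we cannot afford one label per child, but we can afford to encode the \emph{index} $i$ of the child edge $vc_i$ in the \emph{length} of the subdivided path together with a small constant number of labels used cyclically. Concretely I would subdivide the edge at distance $x$ from the root into roughly $(d+1)^{h-x-1}$ pieces scaled by a factor depending on the child index $i$, so that the edge to the $i$-th child is subdivided $\Theta\!\lr{(i)(d+1)^{h-x-1}}$ times; this is what produces the $2d(d+1)^{h-1}$ bound on division vertices per edge (the factor $2d$ absorbing the largest child index and a small constant slack). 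The labels themselves would use a constant-size alphabet (this is where the jump from $4$ colours per label, giving $8$, to $5$ colours per label, giving $10$, comes in): two labels to distinguish "left part" versus "right part" of a branch as before, plus an extra label to break ties among children whose subdivision-lengths alone do not separate them. Each vertex then gets a pair $(\ell, w_x)$ exactly as in Theorem~\ref{thm:sub_binaryTree}, where $w$ is a fixed anagram-free word on $\{1,2,3,4\}$ (or $\{1,\dots,5\}$) and $x$ counts the vertices of label $\ell$ on the path back to the root, so that restricting any root-descending path to a single label class yields a subword of $w$.

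The verification splits into the same two cases as before. First, if $P$ is a path all of whose vertices of some label $\ell$ form an ancestor/descendant chain, then $\omega_{\{\ell\}\times[5]}(P)$ is a subword of the anagram-free word $w$, hence not an anagram, so by Lemma~\ref{lem:anagramFreeSubColor} $P$ is not an anagram. Second, if no label class of $P$ is a chain, then $P$ "turns" at its minimum-depth vertex $u$, which must be a branch vertex of $T$ with two children $x,y$ both on $P$, and $P$ decomposes into a part $X$ descending towards $x$ (through possibly a long subdivided edge $ux$) and a part $Y$ descending towards $y$ (through a long subdivided edge $uy$). As in the binary case, the key is a counting argument bounding how far the midedge of $P$ can be from $u$: the lengths of the subdivided edges $ux$ and $uy$ are both exponential in $h-\text{depth}(u)$, and everything hanging below $x$ on the same side is a geometric sum in a smaller exponent, so the midedge is forced to lie within a controlled window around $u$. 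Inside that window the label-classes and counters of the two sides are out of sync — either because $x$ and $y$ have different labels (forced by the index-encoding scheme) or, if the labels happen to coincide, because the counter values $w_x$ are shifted by an amount the window is too small to equalise — and this mismatch gives $M_{C'}(L)\neq M_{C'}(R)$ for a suitable label-restriction $C'$.

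The main obstacle is the design of the labelling at branch vertices: with only a constant number of labels one cannot separate all $d$ children by label alone, so the proof must lean on the subdivision \emph{lengths} to separate most pairs of children, and carefully choose the extra label(s) to handle the residual pairs whose length-profiles are too similar. Getting the arithmetic of the subdivision lengths right — so that (a) the total per-edge subdivision stays within $2d(d+1)^{h-1}$, (b) the geometric-sum bound on the midedge window still goes through uniformly in $d$, and (c) the residual indistinguishable pairs are few enough to be separated by one extra label — is the delicate part, and is presumably why the authors first prove the cleaner but weaker Theorem~\ref{thm:colorDaryTree_simple}/\ref{thm:colorDaryTree} with $10$ colours before optimising. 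I would set up the length function explicitly as $\text{(edge to $i$-th child at depth $x$ is subdivided)} = c_i(d+1)^{h-x-1}-1$ for an increasing integer sequence $c_1<\dots<c_d\le 2d$, verify the window bound by the same "$|Y\cap V(R)|$ large, $|X\cap V(R)|$ forced small" inequality chain as in Theorem~\ref{thm:sub_binaryTree}, and then discharge the label-mismatch on a case analysis of whether $x$ and $y$ receive equal labels.
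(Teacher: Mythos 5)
Your subdivision-length scheme is essentially the paper's: the edge to the $i$-th child at depth $x$ gets $\Theta\lr{i(d+1)^{h-x-1}}$ division vertices (the paper uses exactly $2y(d+1)^{h-z-1}$ where $y\in[d]$ is the child index), and your two-case split into ``monotone'' paths and paths that turn at their minimum-depth vertex $u$ also matches. But the heart of the proof --- how to defeat a turning path when the children of $u$ cannot all be separated by label --- is exactly the part you leave unresolved, and your proposed fallback does not work. If the edges $ux$ and $uy$ receive the same label $\ell$ and the counter of a vertex $v$ is the number of label-$\ell$ vertices on the $vr$-path (as in Theorem~\ref{thm:sub_binaryTree}), then the counters are \emph{symmetric} about $u$, not shifted: the first label-$\ell$ vertex below $u$ on each side has the same counter value $a+1$, so the restriction of $P$ to label $\ell$ reads $\dots w_{a+2}w_{a+1}w_{a+1}w_{a+2}\dots$, which contains the square $w_{a+1}w_{a+1}$. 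So ``the counter values are shifted by an amount the window is too small to equalise'' is false, and with only $O(1)$ labels for $d$ children some pair sharing a label is unavoidable. (Your accounting of the colour count is also off: the paper's $10$ is $2$ colours for original vertices plus $2$ labels times $4$ word symbols, not $5$ symbols per label.)

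The idea you are missing is that the paper does not attempt to distinguish children by label at all. Instead each subdivided edge is split into a \emph{red} half (nearer the parent) and a \emph{green} half (nearer the child), the child index being encoded only in the edge's length. For a path turning at an original vertex $u$, let $\alpha$ be the largest edge hit by $P$ and $\beta$ the second largest; the choice $t_{x,y}=y(d+1)^{x-1}$ guarantees $|D_\alpha|>|D_\beta|+b$, where $b$ bounds the total number of division vertices strictly below $\alpha$. Then a pure counting argument finishes: wherever the midedge falls, either all green vertices of $\alpha$ lie in $L$, forcing $|\omega_{\text{green}}(L)|>|\omega_{\text{green}}(R)|$, or all red vertices of $\alpha$ lie in $R$, forcing $|\omega_{\text{red}}(R)|>|\omega_{\text{red}}(L)|$; in either case the multisets on the two halves differ and Lemma~\ref{lem:anagramFreeSubColor} applies. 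The anagram-free word on $\{1,2,3,4\}$ is needed only for the monotone case, where red-depth and green-depth increase strictly along the path. Without this (or some equivalent mechanism) your case ``$x$ and $y$ receive equal labels'' has no proof, so the proposal as written has a genuine gap.
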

\begin{proof}
	Let $r$ be the root of $T$. For all $x \in [h]$ and $y \in [d]$ let $t_{x,y} = y(d + 1)^{x - 1}$. Define the labelling $\ell : E(T) \to [d]$ such that edges incident with the same parent vertex receive distinct labels. Let $S$ be the subdivision of $T$ such that every edge $e \in E(T)$ is subdivided $2t_{h - z, \ell(e)}$ times where $z$ is the depth of $e$. Note that $z \in \{0, \ldots, h - 1\}$.
	
	Let $\ell_T : V(T) \to \{\text{black}, \text{white}\}$ be a proper vertex $2$-colouring of $T$. Define the labelling $\ell_S : V(S) \to \{\text{black}, \text{white}, \text{red}, \text{green}\}$ as follows. If $v \in V(S)$ is an original vertex then $\ell_S(v) := \ell_T(v)$. Otherwise, let $v' \in V(S)$ be the closest original vertex to $v$ and $e \in E(T)$ be the edge such that $v$ is a division vertex of $e$. If $v'$ is the parent of $e$ then $\ell_S(v) := \text{red}$, otherwise $\ell_S(v) := \text{green}$. Note that $v'$ is well defined because all edges of $T$ have an even number of division vertices. See Figure \ref{fig:daryTreeSubdiv} for an example of this construction.
	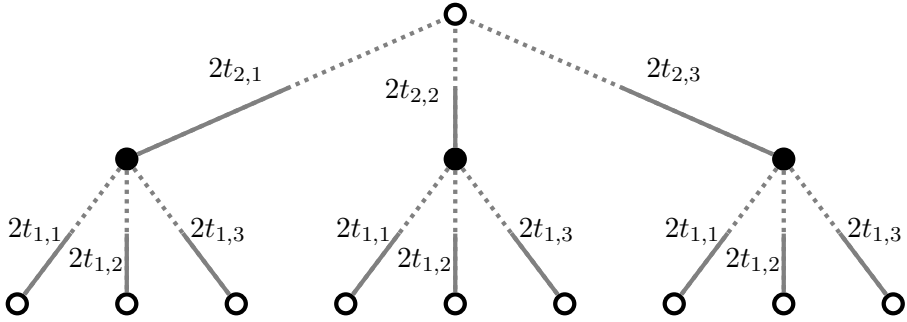
\begin{figure}[h]
		\begin{center}
			\begin{tikzpicture}[
				line width=1.6pt,
				vertex/.style={circle,inner sep=0pt,minimum size=0.25cm}, 
				point/.style={circle,inner sep=0pt,minimum size=0cm}, 
				scale = 0.4
				]
			
				\node[draw = black, fill = white] (b) at ($(0\x,-2\y)$) [vertex] {};
			
				\node[draw = white, fill = white] (o1) at ($(-3\x,-3\y)$) [point] {};
				\node[draw = white, fill = white] (o2) at ($(0\x,-3\y)$) [point] {};
				\node[draw = white, fill = white] (o3) at ($(3\x,-3\y)$) [point] {};
				
				\node[draw = white, fill = white] () at ($(-3\x - 1.8,-3\y + 0.5)$) [point] {$2t_{2,1}$};
				\node[draw = white, fill = white] () at ($(0\x - 1.4,-3\y - 0.2)$) [point] {$2t_{2,2}$};
				\node[draw = white, fill = white] () at ($(3\x + 1.8,-3\y + 0.5)$) [point] {$2t_{2,3}$};
				
				\node[draw = black, fill = black] (b1) at ($(-6\x,-4\y)$) [vertex] {};
				\node[draw = black, fill = black] (b2) at ($(0\x,-4\y)$) [vertex] {};
				\node[draw = black, fill = black] (b3) at ($(6\x,-4\y)$) [vertex] {};
				
				\node[draw = black, fill = white] (o11) at ($(-7\x,-5\y)$) [point] {};
				\node[draw = black, fill = white] (o12) at ($(-6\x,-5\y)$) [point] {};
				\node[draw = black, fill = white] (o13) at ($(-5\x,-5\y)$) [point] {};
				\node[draw = black, fill = white] (o21) at ($(-1\x,-5\y)$) [point] {};
				\node[draw = black, fill = white] (o22) at ($(0\x,-5\y)$) [point] {};
				\node[draw = black, fill = white] (o23) at ($(1\x,-5\y)$) [point] {};
				\node[draw = black, fill = white] (o31) at ($(5\x,-5\y)$) [point] {};
				\node[draw = black, fill = white] (o32) at ($(6\x,-5\y)$) [point] {};
				\node[draw = black, fill = white] (o33) at ($(7\x,-5\y)$) [point] {};
				
				\node[draw = white, fill = white] () at ($(-7\x - 1.2,-5\y)$) [point] {$2t_{1,1}$};
				\node[draw = white, fill = white] () at ($(-6\x - 1,-5\y - 1)$) [point] {$2t_{1,2}$};
				\node[draw = white, fill = white] () at ($(-5\x + 1.2,-5\y)$) [point] {$2t_{1,3}$};
				\node[draw = white, fill = white] () at ($(-1\x - 1.2,-5\y)$) [point] {$2t_{1,1}$};
				\node[draw = white, fill = white] () at ($(0\x - 1,-5\y - 1)$) [point] {$2t_{1,2}$};
				\node[draw = white, fill = white] () at ($(1\x + 1.2,-5\y)$) [point] {$2t_{1,3}$};
				\node[draw = white, fill = white] () at ($(5\x - 1.2,-5\y)$) [point] {$2t_{1,1}$};
				\node[draw = white, fill = white] () at ($(6\x - 1,-5\y - 1)$) [point] {$2t_{1,2}$};
				\node[draw = white, fill = white] () at ($(7\x + 1.2,-5\y)$) [point] {$2t_{1,3}$};
				
				\node[draw = black, fill = white] (b11) at ($(-8\x,-6\y)$) [vertex] {};
				\node[draw = black, fill = white] (b12) at ($(-6\x,-6\y)$) [vertex] {};
				\node[draw = black, fill = white] (b13) at ($(-4\x,-6\y)$) [vertex] {};
				\node[draw = black, fill = white] (b21) at ($(-2\x,-6\y)$) [vertex] {};
				\node[draw = black, fill = white] (b22) at ($(0\x,-6\y)$) [vertex] {};
				\node[draw = black, fill = white] (b23) at ($(2\x,-6\y)$) [vertex] {};
				\node[draw = black, fill = white] (b31) at ($(4\x,-6\y)$) [vertex] {};
				\node[draw = black, fill = white] (b32) at ($(6\x,-6\y)$) [vertex] {};
				\node[draw = black, fill = white] (b33) at ($(8\x,-6\y)$) [vertex] {};
				
				\draw[draw=gray, dotted](b1)--(b);
				\draw[draw=gray, dotted](b2)--(b);
				\draw[draw=gray, dotted](b3)--(b);
				
				\draw[draw=gray](b1)--(o1);
				\draw[draw=gray](b2)--(o2);
				\draw[draw=gray](b3)--(o3);
				
				\draw[draw=gray, dotted](b1)--(b11);
				\draw[draw=gray, dotted](b1)--(b12);
				\draw[draw=gray, dotted](b1)--(b13);
				\draw[draw=gray, dotted](b2)--(b21);
				\draw[draw=gray, dotted](b2)--(b22);
				\draw[draw=gray, dotted](b2)--(b23);
				\draw[draw=gray, dotted](b3)--(b31);
				\draw[draw=gray, dotted](b3)--(b32);
				\draw[draw=gray, dotted](b3)--(b33);
				
				\draw[draw=gray](b11)--(o11);
				\draw[draw=gray](b12)--(o12);
				\draw[draw=gray](b13)--(o13);
				\draw[draw=gray](b21)--(o21);
				\draw[draw=gray](b22)--(o22);
				\draw[draw=gray](b23)--(o23);
				\draw[draw=gray](b31)--(o31);
				\draw[draw=gray](b32)--(o32);
				\draw[draw=gray](b33)--(o33);
			\end{tikzpicture}
		\end{center}
		\caption{$S$ for the complete $3$-ary tree of height $2$. The edges represent a number of division vertices denoted by their label. Each edge has a dotted half representing its red division vertices and a solid half representing its green division vertices.}
		\label{fig:daryTreeSubdiv}
	\end{figure}
	
	Define the \textit{red-depth} of a vertex $v \in V(S)$ to be the number of red vertices on the $vr$-path in $S$ and define \textit{green-depth} analogously. Let $W = w_1 w_2 \ldots$ be a long anagram-free word on $\{1,2,3,4\}$. Define the vertex colouring $f$ as follows. If $v \in V(S)$ is an original vertex then colour $v$ by $\ell_S(v)$. Otherwise, let $i$ be the $\ell_S(v)$-depth of $v$ and define $f(v) := (w_i, \ell_S(v))$. A vertex has label black or white if and only if it is an original vertex so $f$ is a $10$-colouring of $S$.
	
	Let $P$ be a path in $S$ and assume for the sake of contradiction that $P$ is an anagram. $P$ contains at least one division vertex because the original vertices have a proper colouring in $T$, and all edges not incident with leaves have at least one division vertex. Let $u$ be the vertex with minimum depth in $P$.
	
	First consider the case where $u$ is an endpoint of $P$. In this case $V(P) \subseteq A_S(v)$ for all vertices $v \in V(P)$. Without loss of generality $P$ contains a red division vertex. Therefore the red-depth increments by one for red vertices along $P$. Since the red vertices are coloured according to $W$ and their red-depth, $\omega_{\text{red}}(P)$ is a subword of $W$ and thus not an anagram. Therefore, by Lemma \ref{lem:anagramFreeSubColor}, $P$ is not an anagram.
	
	The remaining case is where $u$ is not an endpoint of $P$. In this case $u$ is an original vertex. For all $e \in E(T)$, let $D_e$ be the division vertices of $e$. Say that $P$ \textit{hits} $e$ if $D_e \cap V(P) \neq \emptyset$ and that $P$ \textit{contains} $e$ if $D_e \subseteq V(P)$. Let $\alpha$ be the largest edge in $T$ (the edge with most division vertices in $S$) hit by $P$ and $\beta$ be the second largest edge in $T$ hit by $P$. Since $t_{x,y} > t_{x', y'}$ for all $y$, $y'$, and $x > x'$ the edges of $T$ are larger nearer the root so both $\alpha$ and $\beta$ are adjacent to $u$. Let $v_\alpha$ and $v_\beta$ be the endpoints of $P$ denoted such that $uv_\alpha\text{-path}$ hits $D_\alpha$.
	
	Let $C' = \{\text{red}, \text{green}\} \times \{1,2,3,4\}$ and define $W_L$, $W_\alpha$, $W_\beta$ and $W_R$ as follows. Firstly, the concatenation $W_L W_\alpha W_\beta W_R = \omega_{C'}(P)$. $W_\alpha$ is the subword corresponding the vertices $V(P) \cap D_\alpha$. $W_L$ is the subword corresponding to the division vertices in $V(uv_\alpha\text{-path}) \setminus D_\alpha$, note that $W_L$ may be the empty word. Similarly, $W_\beta$ is the subword corresponding the vertices $V(P) \cap D_\beta$ and $W_R$ is the subword corresponding to the remaining division vertices of $P$.
	
	Let $x_\alpha$, $y_\alpha$ and $y_\beta$ be such that $|D_\alpha| = 2t_{x_\alpha, y_\alpha}$ and $|D_\beta| = 2t_{x_\alpha, y_\beta}$. Firstly, 
	\begin{align*}
		|W_L| \leq b := 2\sum_{i = 1}^{x_\alpha - 1} t_{i, d}.
	\end{align*}
	because $|W_L|$ is at most the number of division vertices on the longest path from the child of $\alpha$ to a leaf of $S$. Similarly $|W_R| \leq b$. For all $x \in [h]$,
	\begin{align*}
		t_{x,1} = 1 + \sum_{i = 1}^{x - 1} t_{i, d}
	\end{align*}
	because, by induction on $x$,
	\begin{align*}
		t_{x,1} &= (d + 1)t_{x - 1,1} = (d + 1)\lr{1 + \sum_{i = 1}^{x - 2} t_{i, d}} = (d + 1) + \sum_{i = 2}^{x - 1} t_{i, d} = 1 + \sum_{i = 1}^{x - 1} t_{i, d}.
	\end{align*}
	Therefore
	\begin{align*}
		|D_\alpha| = 2t_{x_\alpha,y_\alpha} \geq 2t_{x_\alpha,1} = 2 + 2\sum_{i = 1}^{x_\alpha - 1} t_{i, d} > b.
	\end{align*}
	Similarly $|D_\beta| > b$. Also,
	\begin{align*}
		|D_\alpha| = 2y_\alpha t_{x_\alpha,1} \geq 2y_\beta t_{x_\alpha, 1} + 2t_{x_\alpha, 1} = 2t_{x_\alpha, y_\beta} + 2 + 2\sum_{i = 1}^{x_\alpha - 1} t_{i, d} > |D_\beta| + b.
	\end{align*}
	Recall that the vertex colouring of $T$ is a proper $2$-colouring and that $V(P)$ contains an original vertex. The shortest anagram in a proper $2$-colouring has four vertices. Therefore, by Lemma \ref{lem:anagramFreeSubColor}, both $L$ and $R$ contain at least two original vertices so $P$ contains at least three edges of $T$. This implies that at least one of $W_L$ and $W_R$ are not the empty word. Thus at least one of $\alpha$ and $\beta$ is contained in $P$. Let $LR$ be the split of $P$ with $v_\alpha \in V(L)$.
	
	Consider the case where $\alpha$ is not contained in $P$. Then $|W_\beta| = |D_\beta| > b \geq |W_R|$ so $W_R$ is a subword of $\omega_{C'}(R)$. This implies that $L$ only contains one original vertex, which is a contradiction, so $P$ is not an anagram. 
	
	Now consider the case where $\alpha$ is contained in $P$. Then $|W_\alpha| = |D_\alpha|$. Since exactly half the division vertices of each edge are labelled red, 
	\begin{align*}
		|\omega_\text{red}(W_\alpha)| &= |\omega_\text{green}(W_\alpha)| = \frac{|W_\alpha|}{2}\\
		|\omega_\text{green}(W_\beta)| &\leq \frac{|W_\beta|}{2}\\
		|\omega_\text{red}(W_L)| &\leq \frac{b}{2}\\
		|\omega_\text{green}(W_R)| &\leq \frac{b}{2}.
	\end{align*}
	If all vertices corresponding to $\omega_\text{green}(W_\alpha)$ are in $L$, then
	\begin{align*}
		|\omega_\text{green}(L)| \geq |\omega_\text{green}(W_\alpha)| > \frac{|D_\beta| + b}{2} \geq |\omega_\text{green}(W_\beta)| + |\omega_\text{green}(W_R)| \geq |\omega_\text{green}(R)|.
	\end{align*}
	Thus $P$ is not an anagram. If all vertices corresponding to $\omega_\text{red}(W_\alpha)$ are in $R$, then 
	\begin{align*}
		|\omega_\text{red}(R)| \geq |\omega_\text{red}(W_\alpha)| > \frac{b}{2} \geq |\omega_\text{red}(W_L)| \geq |\omega_\text{red}(L)|.
	\end{align*}
	Thus $P$ is not an anagram. This covers all cases since $v_\alpha \in V(L)$.
\end{proof}

Theorem \ref{thm:colorDaryTree_simple} is a simple corollary of Theorem \ref{thm:colorDaryTree}.
\begingroup
\def\thetheorem{\ref{thm:colorDaryTree_simple}}
\begin{theorem}
	Every $d$-ary tree, $T$, of height $h$, has a $\lr{\leq 2d(d+1)^{h-1}}$-subdivision, $S$, with $\phi(S) \leq 10$.
\end{theorem}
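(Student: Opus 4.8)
The plan is to derive this from Theorem~\ref{thm:colorDaryTree} by embedding $T$ into the complete $d$-ary tree and then passing to a subgraph. The one ingredient beyond Theorem~\ref{thm:colorDaryTree} is the monotonicity of $\phi$ under subgraphs: if $H$ is a subgraph of $G$ then $\phi(H)\leq\phi(G)$, since every path of $H$ is a path of $G$, and hence the restriction to $V(H)$ of an anagram-free colouring of $G$ is anagram-free on $H$.

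First I would fix a rooted embedding of $T$ into the complete $d$-ary tree $T'$ of height $h$: send the root of $T$ to the root of $T'$ and, level by level, map the (at most $d$) children of each already-placed vertex $v$ injectively to distinct children of the image of $v$ in $T'$. This is possible because $T$ has height at most $h$ and every vertex of $T$ has at most $d$ children, while every non-leaf of $T'$ has exactly $d$ children. The resulting map is injective and adjacency-preserving, so distinct edges of $T$ are sent to distinct edges of $T'$, each at the same depth as its preimage.

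Now apply Theorem~\ref{thm:colorDaryTree} to $T'$ to obtain a $\lr{\leq 2d(d+1)^{h-1}}$-subdivision $S'$ of $T'$ with $\phi(S')\leq 10$. Let $S$ be the subgraph of $S'$ formed by taking, for each edge $e$ of $T$, the path of $S'$ that subdivides the image of $e$, together with its two endpoints. Since the edges of $T$ map to distinct edges of $T'$, these paths are pairwise internally disjoint, so $S$ is a subdivision of $T$; it is a subgraph of $S'$; and each edge of $T$ is subdivided in $S$ at most as many times as its image is subdivided in $S'$, that is, at most $2d(d+1)^{h-1}$ times. By the monotonicity observation, $\phi(S)\leq\phi(S')\leq 10$, as required. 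The argument is entirely routine, with no real obstacle; the only points to verify---that the rooted embedding exists, that $S$ is a genuine subdivision of $T$, and that the per-edge division bound is inherited from $S'$---are all immediate.
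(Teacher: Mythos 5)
Your proposal is correct and is essentially the paper's own proof, which simply applies Theorem~\ref{thm:colorDaryTree} to the complete $d$-ary tree of height $h$ and takes the appropriate subgraph; you have merely spelled out the routine details (the rooted embedding, the fact that the induced subgraph is a subdivision of $T$, and the monotonicity of $\phi$ under subgraphs) that the paper leaves implicit.
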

\addtocounter{theorem}{-1}
\endgroup
\begin{proof}
	Apply Theorem \ref{thm:colorDaryTree} to the complete $d$-ary tree of height $h$ and take the appropriate subgraph of the resulting subdivision.
\end{proof}
The next section shows that the exponential upper bound on the number of division vertices per edge in Theorem \ref{thm:colorDaryTree_simple} is necessary.

\subsection{Lower bounds}

This subsection extends Theorem \ref{thm:completeBinaryTree}, for complete binary trees, by \citet{kamvcev2016anagram}. We generalise their method of proof to obtain a result about subdivisions of high degree trees. The following definitions are extensions of those found in their original paper.

Let $T$ be a rooted tree with root $r$.  The \emph{effective vertices} of $T$ are its leaves and branch vertices. The \emph{effective root} of $T$ is the closest effective vertex to $r$, including $r$. The \emph{effective height} of $T$ is the minimum, over the leaves of $T$, of the number of branch vertices on each root to leaf path.

Call $T$ \emph{essentially $i$-monochromatic} if all of its effective vertices are coloured $i$. Call $T$ \emph{essentially monochromatic} if it is essentially $i$-monochromatic for some $i$. For $d \geq 2$, a \textit{$d$-branch tree} is a rooted tree such that every branch vertex has at least $d$ children.

\begin{lemma}\label{lemma:essentialMonochromatic}
	For all integers $a_1, \dots, a_c \geq 0$ and $d \geq 2$, every $d$-branch tree with vertices coloured by $[c]$ and effective height at least $\sum_{i = 1}^c a_i$, contains an essentially $i$-monochromatic $d$-branch subtree of effective height at least $a_i$ for some $i \in [c]$. 
\end{lemma}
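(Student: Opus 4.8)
The plan is to argue by induction on $N := \sum_{i=1}^c a_i$; in the inductive step, the parameter we decrease is the one indexed by the colour of the effective root. The base case $N = 0$ is immediate: every $d$-branch tree has effective height at least $0$, and any single vertex, of colour $i$ say, is an essentially $i$-monochromatic $d$-branch subtree of effective height $0 = a_i$. For the inductive step assume $N \geq 1$, let $\rho$ be the effective root of $T$, and let $i_0$ be its colour. If $a_{i_0} = 0$ we are done by taking the single vertex $\rho$, so assume $a_{i_0} \geq 1$. Then $T$ has effective height at least $N \geq 1$, so the effective vertex $\rho$ is not a leaf and is therefore a branch vertex; write $u_1, \dots, u_k$ for its children (so $k \geq d$) and let $T_{u_j}$ denote the subtree of $T$ rooted at $u_j$.

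Now set $a'_{i_0} := a_{i_0} - 1$ and $a'_i := a_i$ for $i \neq i_0$, so that $\sum_i a'_i = N - 1$. Every root-to-leaf path of $T$ passes through $\rho$ and continues into a single $T_{u_j}$, so the effective height of $T$ equals $1 + \min_j(\text{effective height of } T_{u_j})$; in particular each $T_{u_j}$ has effective height at least $N - 1 = \sum_i a'_i$. Applying the induction hypothesis to each $T_{u_j}$ with the numbers $a'_i$ gives a colour $i_j \in [c]$ and an essentially $i_j$-monochromatic $d$-branch subtree $S_j \subseteq T_{u_j}$ of effective height at least $a'_{i_j}$. If $i_j \neq i_0$ for some $j$ then $a'_{i_j} = a_{i_j}$, so $S_j$ is the required subtree and we are done. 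Otherwise $i_j = i_0$ for all $j$, so each $S_j$ is essentially $i_0$-monochromatic of effective height at least $a_{i_0} - 1$. Let $\rho_j$ be the root of $S_j$, and build $S$ by taking $\rho$ together with, for $d$ of the indices $j$, the path of $T$ from $\rho$ to $\rho_j$ with the subtree $S_j$ hung below $\rho_j$. The branch vertices of $S$ are $\rho$ (which has $d$ children and colour $i_0$) and the branch vertices lying inside the $S_j$ (each with at least $d$ children and colour $i_0$), so $S$ is an essentially $i_0$-monochromatic $d$-branch tree; and every root-to-leaf path of $S$ meets $\rho$ and then at least $a_{i_0} - 1$ further branch vertices, so $S$ has effective height at least $a_{i_0}$, completing the induction.

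The main obstacle I anticipate is the bookkeeping in the final stitching step, specifically checking that $S$ really is essentially $i_0$-monochromatic. Apart from $\rho$ and the branch vertices of the $S_j$, every vertex of $S$ is either an internal vertex of a connecting path from $\rho$ to some $\rho_j$, or is $\rho_j$ itself in the case where $\rho_j$ has exactly one child in $S_j$; by construction each such vertex has exactly one child in $S$, so it is neither a branch vertex nor a leaf of $S$ and hence not an effective vertex of $S$, so its colour does not matter. That $S$ is a well-defined tree, and the stated lower bound on its effective height, then follow from the $u_j$ being distinct children of $\rho$ with $S_j \subseteq T_{u_j}$; the remaining verifications are routine applications of the definitions of effective height and of $d$-branch subtree.
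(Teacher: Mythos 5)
Your proof is correct and follows essentially the same route as the paper: induction on $\sum_i a_i$, decrementing the parameter indexed by the colour of the effective root, and in the surviving case stitching $d$ essentially monochromatic subtrees of the children's subtrees back onto the effective root. The only differences are presentational — you treat the edge case $a_{i_0}=0$ explicitly (which the paper glosses over with its ``without loss of generality'') and you verify the stitching step in more detail; both are harmless refinements of the same argument.
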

\begin{proof}
	We proceed by induction on $\sum_{i = 1}^c a_i$. The base case, $a_1 = \dots = a_c = 0$, is satisfied by taking a single vertex as the required $d$-branch subtree.
	
	Let $T$ be a $d$-branch tree of effective height $a_1 + \dots + a_c \geq 1$ with vertices coloured by $[c]$. Without loss of generality its effective root, $v$, has colour $1$. Let $v_1,\dots,v_d$ be children of $v$. Let $T_j$ be the subtree rooted at $v_j$. Note that $T_j$ has effective height at least $(a_1 - 1) + a_2 + \dots + a_c$. If, for some $j \in [d]$ and $i \in \{2, \dots, c\}$, $T_j$ contains an essentially $i$-monochromatic subtree of effective height $a_i$ then we are done. Otherwise, by induction, each $T_j$ contains an essentially $1$-monochromatic $d$-branch subtree of effective height $a_1 - 1$. These subtrees, together with $v$, are an essentially $1$-monochromatic $d$-branch subtree of $T$, as required.
\end{proof}

We now prove a lower bound on $\phi$ by using an essentially monochromatic subtree to find anagrams in sufficiently large trees.
\begin{theorem}\label{thm:effectiveDepthDegree}
	Let $T$ be a $d$-branch tree of effective height at least $h'$ and height at most $h \geq \max\{2, \sqrt{d}\}$. Then
	\begin{align*}
		\phi(T) \geq c:= \ceil{\sqrt{\frac{h'}{\log_dh}}\,\,}.
	\end{align*}
\end{theorem}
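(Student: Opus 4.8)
The plan is to argue by contradiction: suppose $T$ has an anagram-free colouring using fewer than $c$ colours, say $c-1$ colours, and derive that $T$ (or a suitable subtree) contains a path whose colour sequence is an anagram. The key leverage is Lemma~\ref{lemma:essentialMonochromatic}: since $T$ is a $d$-branch tree of effective height at least $h'$, if we split $h'$ as $a_1+\dots+a_{c-1}$ with each $a_i$ roughly $h'/(c-1)$, we obtain an essentially $i$-monochromatic $d$-branch subtree $T'$ of effective height at least $a_i \approx h'/(c-1)$, for some colour $i$. Inside $T'$ all \emph{effective} vertices (branch vertices and leaves) carry the single colour $i$, so the only "freedom" the colouring has is on the long paths between consecutive branch vertices.

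The heart of the argument is then a counting/pigeonhole step on $T'$. First I would descend from the effective root of $T'$ along branch vertices, collecting the colour-$i$ vertices encountered; because $T'$ is a $d$-branch tree, at effective-height level $j$ there are at least $d^{j}$ branch vertices, each contributing an occurrence of colour $i$ to some root-path. The idea is to build a path $P$ in $T'$ whose colour-$i$ positions are "balanced": we want to find, going down and then turning at some branch vertex and coming back down a sibling subtree, two halves $L$ and $R$ of a path such that $M(L) = M(R)$. Restricting to colour $i$ (via Lemma~\ref{lem:anagramFreeSubColor}), the count of colour-$i$ vertices in the two halves must be forced equal by the monochromatic structure — a branch vertex and its two descending subtrees of equal effective depth give mirror-image colour-$i$ counts — and then one must match the multisets of the \emph{other} $c-2$ colours. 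This is where the inequality $c \ge \sqrt{h'/\log_d h}$ enters: along a path of height at most $h$ there are at most $h$ vertices of each of the remaining $c-2$ colours, so the number of possible colour multisets on those colours that can appear on a half-path is at most $h^{c-2}$ or so; once the number of "candidate branches" at some effective level, which is exponential in the effective height $\approx h'/c$ of $T'$, exceeds $h^{c}$, pigeonhole forces two branches whose half-paths agree on every colour, producing an anagram. Solving $d^{\,h'/c^2} \gtrsim h^{c}$, i.e. $(h'/c^2)\log d \gtrsim c\log h$, gives $c^{3} \lesssim h'\log d/\log h$... which is not quite the stated bound, so the actual extraction must be tighter; more likely one fixes the branching direction so only the $c-1$ "colour budgets" $a_i$ matter and the relevant count is $h^{\,c-1}$ against $d^{\,h'/(c-1)}$ ranged over all $c-1$ colours, giving $(c-1)\log h \le (h'/(c-1))\log d$, hence $c^2 \gtrsim h'/\log_d h$, matching $c = \ceil{\sqrt{h'/\log_d h}}$.

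Concretely, the steps are: (1) assume $\phi(T) < c$, so $T$ is coloured with at most $c-1$ colours; (2) apply Lemma~\ref{lemma:essentialMonochromatic} with $a_i = \floor{h'/(c-1)}$ for all $i$ (valid since $\sum a_i \le h'$) to get an essentially $i$-monochromatic $d$-branch subtree $T'$ of effective height $\ge h'/(c-1)$; (3) in $T'$, consider the $d^{a}$-many effective paths of $a := $ effective height from the effective root; assign to each such path the multiset of colours it uses \emph{other than} $i$ on one designated half — there are at most $h^{c-2}$ (or $h^{c-1}$) such multisets since the whole path has at most $h$ vertices; (4) when $d^{a} > h^{c-1}$, two distinct effective paths, agreeing up to some branch vertex $w$ and then diverging, have matching half-multisets on the non-$i$ colours; (5) use that $T'$ is essentially $i$-monochromatic to argue the colour-$i$ counts on the two halves are also equal (the two halves are mirror images through $w$ with equal numbers of effective = colour-$i$ vertices), so the full path through $w$ is an anagram, contradicting anagram-freeness; (6) check that $d^{a} > h^{c-1}$ follows from $c = \ceil{\sqrt{h'/\log_d h}}$ together with $h \ge \max\{2,\sqrt d\}$, which controls the $-1$'s and floors.

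The main obstacle I anticipate is step (5): ensuring that after matching the non-$i$ colours we genuinely get \emph{equal} multisets on all colours, i.e. that the colour-$i$ positions automatically balance. This needs the two diverging subtrees below $w$ to have exactly equal effective depth so their numbers of branch vertices (all coloured $i$) along the chosen root-to-leaf paths match, and it needs the path segment from the effective root of $T'$ down to $w$ to be split evenly by the midedge — so some care is required in choosing $w$ at the right effective level and choosing the two leaves to make $|L|=|R|$ exactly. Handling the parity (even-path requirement) and the possibility that $w$ itself lands on the midedge will be the fiddly part; the floor/ceiling bookkeeping in the final inequality is routine by comparison. I would also double-check whether the reduction actually needs the full strength "height at most $h$" (to bound the number of multisets) versus only a bound on path length, and cite Theorem~\ref{thm:completeBinaryTree} as the $d=2$ special case to sanity-check the exponents.
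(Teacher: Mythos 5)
Your skeleton matches the paper's proof: assume at most $c-1$ colours are used, apply Lemma~\ref{lemma:essentialMonochromatic} to extract an essentially $i$-monochromatic $d$-branch subtree of effective height roughly $h'/(c-1)$, pigeonhole the $d^{\floor{h'/x}}$ root-to-leaf paths of that subtree against roughly $h^{x}$ possibilities, and build an anagram at the branching point of two colliding paths; the closing arithmetic ($h^{c-1}$ versus $d^{h'/(c-1)}$) is also the right computation. However, your step (5) has a genuine gap. You propose to pigeonhole only on the multiset of the non-$i$ colours and then argue that the colour-$i$ counts on the two diverging legs ``automatically balance'' because the effective vertices all have colour $i$. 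This fails: essential monochromaticity constrains only the branch vertices and leaves, so colour $i$ may also appear arbitrarily on the non-effective vertices along the segments between branch vertices, and the two legs below $w$ need not even have the same number of vertices. Nothing forces the colour-$i$ counts (or the leg lengths) to agree, so the concatenated path need not be an anagram.

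The fix, which is what the paper does, is to pigeonhole on the \emph{full} colour multiset of each root-to-leaf path; these still number only about $h^{x}$ (each path has at most $h+1$ vertices and always contains the root's colour), so the same inequality $d^{\floor{h'/x}} > h^{x}$ suffices. Two paths $P_1,P_2$ with $M(P_1)=M(P_2)$ share their initial segment down to the lowest common vertex $v$, which cancels, giving $M(P_1-P_2)=M(P_2-P_1)$ for the two legs --- and in particular equal leg lengths, for free. The parity issue you flag is then resolved not by placing the midedge carefully but by deleting one leaf endpoint $\ell_1$ from the path $\ell_1\to v\to\ell_2$: since $\ell_1$ and $v$ are both effective and hence both coloured $i$, the half consisting of $(P_1-P_2)\setminus\{\ell_1\}$ together with $v$ has the same multiset as $P_1-P_2$, while the other half is exactly $P_2-P_1$. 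The segment above $v$ plays no role in the anagram itself, so your concerns about splitting it evenly at the midedge are moot.
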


\begin{proof}
	If $c \leq 1$ the theorem follows trivially, so assume $c > 1$. Let $T$ be coloured with $x$ colours where $1 \leq x \leq c - 1$. Our goal is to show that $T$ contains an anagram. For $i \in [x]$ define $a_i \in \{\floor{h'/x}, \ceil{h'/x}\}$ such that $\sum_{i = 1}^x a_i = h'$. By Lemma \ref{lemma:essentialMonochromatic}, and without loss of generality, $T$ contains an essentially $1$-monochromatic $d$-branch subtree, $S$, of effective height at least $\floor{h'/x}$. 
	
	Let $r$ be the root of $S$. There are at least $d^{\floor{h'/x}}$ paths from $r$ to the leaves of $S$, and the colouring of each path defines a multiset of order at most $h + 1$. Since each path shares the colour of $r$, there are at most $h^{x}$ distinct multisets that can occur on the paths. Since $x \leq c - 1$,
	\begin{align*}
		\#\text{multisets} \leq h^{x} < h^{(c^2/x) - 2}
	\end{align*}
	Since $h \geq \sqrt{d}$
	\begin{align*}
		h^{(c^2/x) - 2}  \leq \frac{1}{d}h^{(c^2/x)}.
	\end{align*}
	Therefore
	\begin{align*}
		\#\text{multisets} < \frac{1}{d}h^{(c^2/x)} = \frac{1}{d}\lr{h^{\frac{1}{\log_d h}}}^{(h'/x)}  = d^{(h'/x) - 1} \leq d^{\floor{h'/x}} \leq \#\text{paths}.
	\end{align*}
	So there is a multiset that occurs on two different paths, $P_1$ and $P_2$, from $r$ to the leaves of $S$. Let $v$ be the lowest common vertex of $P_1$ and $P_2$, and let $\ell_i$ be the leaf endpoint of $P_i$. By definition, $M(P_1) = M(P_2)$ so $M(P_1 - P_2) = M(P_2 - P_1)$. Since $S$ is essentially $1$-monochromatic, the vertices $v$, $\ell_1$, and $\ell_2$ have colour $1$ so $((P_1 - P_2) \setminus \{\ell_1\})((P_2 - P_1) \setminus \{v\})$ is an anagram.
\end{proof}

\subsection{Bounds for subdivisions of the complete $d$-ary tree}

We now use Theorem \ref{thm:colorDaryTree} to prove an upper bound on $\phi$ for some subdivision of a given tree.
\begin{cor}\label{cor:leq_dary_chrom}
	For every $k \geq 0$, every complete $d$-ary tree of height $h'$, $T$, there exists a $(\leq k)$-subdivision, $S$, such that 
	\begin{align*}
			\phi(S) \leq c := 10\ceil{\frac{h'}{\log_{d + 1}(k/2d)}}.
	\end{align*}
\end{cor}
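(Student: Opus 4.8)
The plan is to take the complete $d$-ary tree $T$ of height $h'$ and chop it into short segments along the root-to-leaf direction, apply Theorem~\ref{thm:colorDaryTree} to each segment, and concatenate the colourings with an anagram-free shift. Concretely, fix a height parameter $h$ (to be chosen) and partition the vertex set of $T$ by depth into consecutive bands of $h$ levels each, so that $T$ decomposes into roughly $h'/h$ "layers", each layer being a disjoint union of complete $d$-ary trees of height $h$. Theorem~\ref{thm:colorDaryTree} gives, for each such small tree, a subdivision in which every edge is subdivided at most $2d(d+1)^{h-1}$ times and which admits an anagram-free $10$-colouring. The condition $2d(d+1)^{h-1}\le k$, i.e. $h\le 1+\log_{d+1}(k/2d)$, is exactly what makes this a legal $(\le k)$-subdivision; taking $h=\lfloor 1+\log_{d+1}(k/2d)\rfloor$ (so $h=\Theta(\log_{d+1}(k/2d))$) we get $\lceil h'/h\rceil = \lceil h'/\log_{d+1}(k/2d)\rceil$ layers up to the floor/ceiling bookkeeping, hence the factor of $10$ times $\lceil h'/\log_{d+1}(k/2d)\rceil$ in the bound.

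The subdivision $S$ of $T$ is then obtained by subdividing each edge of $T$ according to which layer it lies in, using the subdivision prescribed by Theorem~\ref{thm:colorDaryTree} applied to that layer's small tree. For the colouring: use a disjoint palette of $10$ colours for each layer $j$, so colour $v$ by the pair (layer index of $v$, colour of $v$ in the layer-$j$ anagram-free colouring). This uses $10\lceil h'/\log_{d+1}(k/2d)\rceil$ colours in total, matching $c$. The key point is that a path $P$ in $S$ that stays within a single layer is non-anagram by Theorem~\ref{thm:colorDaryTree} directly; a path that crosses layer boundaries must be handled by the palette argument: since the layer-index coordinate is constant on long stretches and changes as the path descends or ascends through the tree, we can pick the colour class $C'$ of a single layer that $P$ visits and observe that $\omega_{C'}(P)$ is a subword of the layer colouring restricted to that path, which Theorem~\ref{thm:colorDaryTree} guarantees is not an anagram; then Lemma~\ref{lem:anagramFreeSubColor} finishes it.

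The delicate step — and I expect it to be the main obstacle — is the crossing case: a path in $T$ (hence in $S$) need not be monotone in depth, it can go up to a common ancestor and back down, so it may enter and leave the same layer on two different sides, and one must verify that for at least one layer the restriction $\omega_{C'}(P)$ really is a contiguous path-segment in that layer's tree (and not two disjoint pieces that could together form an anagram even though each piece individually is anagram-free). This is where the choice of "lowest" versus "highest" layer visited by $P$, combined with the fact that a layer tree is a disjoint union of complete $d$-ary trees, matters: one should argue that the topmost layer $P$ meets is entered only once (the portion of $P$ in that layer is connected, being the part above the highest-common-ancestor's layer), so restricting to that layer's palette yields a genuine subpath of the layer tree, and Theorem~\ref{thm:colorDaryTree} applies. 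The remaining calculations — translating $2d(d+1)^{h-1}\le k$ into the logarithmic bound on the number of layers, and checking the ceiling arithmetic $\sum_j$ of layer heights equals $h'$ — are routine.
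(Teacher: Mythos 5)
Your proposal is correct and follows essentially the same route as the paper's proof: decompose $T$ into depth-bands of height roughly $\log_{d+1}(k/2d)$, apply Theorem~\ref{thm:colorDaryTree} to each band's component trees with a fresh palette of $10$ colours per band, and observe that the topmost band met by any path $P$ is met in a single connected subpath of one component, so that restriction plus Lemma~\ref{lem:anagramFreeSubColor} rules out anagrams. The step you flag as delicate is exactly the paper's argument (restricting to the minimum-depth component intersected by $P$), so there is no gap.
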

\begin{proof}
	Let $x := c/10$ and let $B \subseteq E(T)$ be the set of edges with depths $i\ceil{\frac{h'}{x}} - 1$ for $i \in \{0, \dots, x - 1\}$, recalling that the depth of an edge is the minimum depth of its endpoints. Let $F := T - B$ and note that $F$ is a forest where each component is a complete $d$-ary tree of height at most $\ceil{h'/x}$. Let $\mathcal{C}$ be the set of components of $F$. Root each component, $C \in \mathcal{C}$, at the vertex $r \in V(C)$ with minimum depth in $T$. The depth of $r$ is $i\ceil{h'/x}$ for some $i \in \{0, \dots, x - 1\}$. Define the \textit{depth} of $C$ to be $i$.
	
	By the definition of $c$ and $x$,
	\begin{align*}
		\log_{d + 1}\lr{\frac{k}{2d}} \geq \frac{h'}{x}.
	\end{align*}
	This implies
	\begin{align*}
		k \geq 2d(d+1)^{\frac{h'}{x}} \geq 2d(d+1)^{\ceil{\frac{h'}{x}} - 1}.
	\end{align*}
	Therefore, by Theorem \ref{thm:colorDaryTree}, for every $C \in \mathcal{C}$, there exist a $(\leq k)$-subdivision, $S_C$, with $\phi(S_C) \leq 10$ since $C$ has height at most $\ceil{h'/x}$. Anagram-free colour $S_C$ using colours $\{10i + 1, \dots, 10(i + 1)\}$ where $i$ is the depth of $C$. Let $S = B + \cup_{C \in \mathcal{C}} S_C$. Note that $S$ is a $(\leq k)$-subdivision of $T$ with a $10x$ colouring. We now show that this colouring of $S$ is anagram-free.
	
	Let $P$ be a subpath of $S$. Let $j \in \{0, \dots, x - 1\}$ be the minimum depth of component $C \in \mathcal{C}$ such that $S_C$ has non-empty intersection with $P$. By the construction of $S$, $P$ intersects with exactly one $C' \in \mathcal{C}'$ of depth $j$. Therefore, $P$ restricted to the colours of $C'$ corresponds to a subpath of $C'$ and, since $C'$ is angram-free, the restriction is not an anagram. Therefore, by Lemma \ref{lem:anagramFreeSubColor}, $P$ is not an anagram.
\end{proof}

The following lemma generalizes results for $(\leq k)$-subdivisions to $k$-subdivisions. Note that the $k$-subdivision a graph, $G$, is a subdivision of every $(\leq k)$-subdivision of $G$.
\begin{lemma}\label{lem:sub_chrom_4}
	If $S$ is a subdivision of $G$ then $\phi(S) \leq \phi(G) + 4$.
\end{lemma}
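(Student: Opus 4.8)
The plan is to take an anagram-free colouring of $G$ with $\phi(G)$ colours, keep it on the original vertices of $S$, and colour the division vertices using a fresh palette of $4$ new colours in a way that locally looks like an anagram-free word on paths. Concretely, fix an anagram-free colouring $f$ of $G$. For each edge $e = uv \in E(G)$ that is subdivided in $S$, let $w_1 w_2 \ldots w_{m}$ be the division vertices of $e$ in the order from $u$ to $v$. Let $A = a_1 a_2 \ldots$ be a fixed infinite anagram-free word on $\{1,2,3,4\}$ (which exists by \citet{keranen1992abelian}). Colour $w_j$ with the new colour $a_j$. So $S$ uses the $\phi(G)$ old colours on original vertices and $4$ new colours on division vertices, giving $\phi(G) + 4$ colours in total; I must show this colouring is anagram-free.

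The key step is to analyse an arbitrary even path $P$ in $S$ with split $LR$ and show $M(L) \neq M(R)$, using Lemma~\ref{lem:anagramFreeSubColor} to restrict attention to well-chosen colour subsets $C'$. There are two regimes. \emph{First}, if $P$ lies entirely within the division vertices of a single edge of $G$, then $P$ is a subpath of the word $A$ up to reversal; since $A$ is anagram-free and anagram-freeness is reversal-invariant, $P$ is not an anagram. \emph{Second}, suppose $P$ contains at least one original vertex. Then I apply Lemma~\ref{lem:anagramFreeSubColor} with $C'$ equal to the set of $\phi(G)$ old colours: the restriction $\omega_{C'}(P)$ is exactly the sequence of colours that $f$ assigns to the original vertices met by $P$, and these original vertices, traversed in order along $P$, form a (possibly non-simple-looking but actually simple) walk in $G$ — in fact a path in $G$, because $P$ is a path in $S$ and contracting each subdivided edge back to an edge of $G$ sends $P$ to a path in $G$. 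Since $f$ is anagram-free on $G$, this restricted word is not an anagram and not empty, so by Lemma~\ref{lem:anagramFreeSubColor}, $P$ is not an anagram. Combining the two regimes, every even path in $S$ fails to be an anagram, so $\phi(S) \leq \phi(G) + 4$.

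The one point that needs genuine care — and the main obstacle — is verifying in the second regime that the original vertices of $S$ visited along $P$, in order, really do trace out a path in $G$ (so that anagram-freeness of $f$ applies). This is where I would be most careful: $P$ is a simple path in $S$, and collapsing each subdivided edge of $G$ back to a single edge is a graph homomorphism $S \to G$ that is injective on original vertices and maps $P$ to a walk; one must check this walk does not repeat a vertex, which follows because any repetition would force $P$ to revisit an original vertex of $S$, contradicting simplicity of $P$. A secondary subtlety is the boundary case where $P$ has original vertices but the split point $LR$ falls among division vertices of one edge; there the restriction to old colours still works verbatim, since Lemma~\ref{lem:anagramFreeSubColor} only cares about the multisets $M_{C'}(L)$ and $M_{C'}(R)$, not about where the midedge sits. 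No delicate counting of division vertices is needed — unlike the earlier theorems, here the new colours are never used to break the anagram; they only need to not destroy anagram-freeness, which the fixed word $A$ guarantees on single-edge subpaths.
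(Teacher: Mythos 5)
Your proposal is correct and follows essentially the same route as the paper: keep an anagram-free $\phi(G)$-colouring on the original vertices, give the forest of division-vertex paths an anagram-free colouring on $4$ fresh colours, and invoke Lemma~\ref{lem:anagramFreeSubColor} by restricting either to the old colours (yielding the colour sequence of a genuine path in $G$) or, when the path avoids all original vertices, to the new colours on a single subdivided edge. You merely spell out the case analysis and the check that the original vertices met by $P$ trace a path in $G$, details the paper leaves implicit.
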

\begin{proof}
	Fix an anagram-free $\phi(G)$-colouring of $G$ and apply the colouring to the original vertices of $S$. The graph induced by the division vertices of $S$ is a forest of paths. Colour all of these paths with an anagram-free colouring on four new colours. By Lemma \ref{lem:anagramFreeSubColor}, this colouring of $S$ is anagram free.
\end{proof}

We now prove Theorem \ref{thm:dary_sub_chrom}, introduced in Section \ref{sec:tree}.
\begingroup
\def\thetheorem{\ref{thm:dary_sub_chrom}}
\begin{theorem}
	The $k$-subdivision, $S$, of the complete $d$-ary tree of height $h'$ satisfies
	\begin{align*}
		\sqrt{\frac{h'}{\log_{\min\{d, (h'(k + 1))^2\}}(h'(k + 1))}} \leq \phi(S) \leq \frac{10h'}{\log_{d + 1}\lr{k/2d}} + 14.
	\end{align*}
\end{theorem}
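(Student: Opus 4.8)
The plan is to prove the two bounds separately, each as a consequence of machinery already developed in the excerpt. The upper bound is the easy direction: apply Corollary~\ref{cor:leq_dary_chrom} with $h' = h'$ to obtain a $(\leq k)$-subdivision $S'$ of the complete $d$-ary tree with $\phi(S') \leq 10\ceil{h'/\log_{d+1}(k/2d)}$, and then observe that the $k$-subdivision $S$ is itself a subdivision of some $(\leq k)$-subdivision of the tree — indeed of $S'$, after possibly subdividing further. Invoking Lemma~\ref{lem:sub_chrom_4} gives $\phi(S) \leq \phi(S') + 4 \leq 10\ceil{h'/\log_{d+1}(k/2d)} + 4 \leq 10h'/\log_{d+1}(k/2d) + 14$, using $\ceil{x} \leq x + 1$. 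The only subtlety is to make sure that the $k$-subdivision really is a subdivision of the specific $(\leq k)$-subdivision produced by the corollary; this follows because replacing an edge subdivided $i \leq k$ times by an edge subdivided $k$ times is itself a subdivision operation, so the $k$-subdivision refines every $(\leq k)$-subdivision, as the remark before Lemma~\ref{lem:sub_chrom_4} already notes.

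For the lower bound the strategy is to exhibit $S$ as a $d$-branch tree (in fact every branch vertex of $S$ has exactly $d$ children, since division vertices have degree $2$ and original non-leaf vertices have $d$ children) of controlled height and effective height, and then feed it into Theorem~\ref{thm:effectiveDepthDegree}. The complete $d$-ary tree of height $h'$ has exactly $h'$ branch vertices on each root-to-leaf path, and subdividing does not create or destroy branch vertices, so $S$ has effective height exactly $h'$. Its ordinary height is the length of a root-to-leaf path, namely $h'$ original edges each blown up to $k+1$ edges, giving height $h'(k+1)$. Thus I would set $h'_{\text{eff}} := h'$ and $h_{\text{height}} := h'(k+1)$ and apply Theorem~\ref{thm:effectiveDepthDegree} with degree parameter $d$, obtaining $\phi(S) \geq \ceil{\sqrt{h'/\log_d(h'(k+1))}\,}$.

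The remaining wrinkle is the $\min\{d, (h'(k+1))^2\}$ appearing in the base of the logarithm in the statement, rather than just $d$. The point is that Theorem~\ref{thm:effectiveDepthDegree} requires $h_{\text{height}} \geq \sqrt{d}$, i.e.\ $h'(k+1) \geq \sqrt{d}$, which need not hold when $d$ is enormous relative to $k$ and $h'$. The fix is the standard one: if $d$ is too large, we simply do not use all $d$ children at each branch vertex — a $d$-branch tree is in particular a $d'$-branch tree for every $d' \leq d$, so we may replace $d$ by $d' := \min\{d, (h'(k+1))^2\}$, which guarantees $h'(k+1) = h_{\text{height}} \geq \sqrt{d'}$ (as $(h'(k+1))^2 \geq d'$), and also $h_{\text{height}} \geq 2$ whenever the bound is non-trivial (if $h'(k+1) < 2$ then $k = 0$, $h' \leq 1$, and the claimed lower bound is at most $1$, hence trivial). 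Applying Theorem~\ref{thm:effectiveDepthDegree} with this $d'$ yields $\phi(S) \geq \ceil{\sqrt{h'/\log_{d'}(h'(k+1))}\,} \geq \sqrt{h'/\log_{\min\{d,(h'(k+1))^2\}}(h'(k+1))}$, which is exactly the claimed lower bound. I expect the main (though still minor) obstacle to be bookkeeping the degenerate small-parameter cases and confirming the $\sqrt{d'}$ hypothesis of Theorem~\ref{thm:effectiveDepthDegree} holds after the replacement; the rest is direct substitution.
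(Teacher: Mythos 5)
Your proposal is correct and follows exactly the route of the paper, whose proof of this theorem is simply to cite Theorem~\ref{thm:effectiveDepthDegree} for the lower bound and Corollary~\ref{cor:leq_dary_chrom} together with Lemma~\ref{lem:sub_chrom_4} for the upper bound. Your write-up in fact supplies more detail than the paper does, in particular the verification that the subdivided tree is a $d$-branch tree of effective height $h'$ and height $h'(k+1)$, and the replacement of $d$ by $\min\{d,(h'(k+1))^2\}$ to satisfy the hypothesis $h \geq \sqrt{d}$.
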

\addtocounter{theorem}{-1}
\endgroup
\begin{proof}
	Theorem \ref{thm:effectiveDepthDegree} proves the lower bound. Corollary \ref{cor:leq_dary_chrom} with Lemma \ref{lem:sub_chrom_4} prove the upper bound.
\end{proof}

\section{Subdivisions of general graphs}

Now we construct subdivisions of arbitrary graphs with bounded anagram-free chromatic number. Let $t = t_1, t_2, \dots$ be a sequence of positive integers. A subdivision, $S$, of a graph $G$, is a \textit{$t$-sequence-subdivision of $G$} if there is a bijection, $\ell: V(G) \to [|V(G)|]$, that satisfies the following two conditions. The first condition is that there is a proper $2$-colouring of $G$, with colours white and black, such that $\ell(u) > \ell(v)$ for every white vertex $u \in V(G)$ and black vertex $v \in V(G)$. The second condition requires some definitions. For every edge, $e \in E(G)$, define $w(e)$ to be the white vertex incident with $e$ and $b(e)$ be the black vertex incident with $e$. Define the bijection, $\ell': E(G) \to [|E(G)|]$, that orders edges in $E(G)$, first by the label of their white endpoint and second by the label of their black endpoint. That is, $\ell'(x) > \ell'(y)$ for edges $x, y \in E(G)$ if $\ell(w(x)) > \ell(w(y))$ or if $\ell(w(x)) = \ell(w(y))$ and $\ell(b(x)) > \ell(b(y))$. Note that $\ell'$ is determined by $\ell$. Now, the second condition on $\ell$ is that every edge, $e \in E(G)$, has $3t_{\ell'(e)}$ division vertices.

Let $G$ be a graph, $t$ be a sequnce of positive integers, and $S$ be a $t$-sequence-subdivision $G$ with corresponding vertex and edge labellings $\ell$ and $\ell'$. Define functions $X$, $Y$, and $Z$ such that for every edge, $e \in E(G)$, $X(e)$, $Y(e)$, and $Z(e)$ are pairwise disjoint subpaths of the path replacing $e$ with $|V(X(e))|=|V(Y(e))|=|V(Z(e))|=t_{\ell'(e)}$, $X(e)$ is adjacent to the white end of $e$, and $Z(e)$ is adjacent to the black end of $e$. Define sets of these paths, $\mathcal{X} := X(E(G))$, $\mathcal{Y} := Y(E(G))$, and $\mathcal{Z} := Z(E(G))$. A vertex colouring of $S$ is \textit{discriminating} if the following conditions hold.
\begin{enumerate}[(1)]
	\item The original vertices of $S$ are coloured by the proper $2$-colouring of $G$, and these two colours only occur on the original vertices. \label{cond:blackWhite}
	\item Every anagram in $S$ contains at least one original vertex. \label{cond:containOrig}
	\item For all $Q \in \{X, Y, Z\}$ there exists a nonempty set of colours, $C(Q)$, that occur only on the vertices of paths in $Q(E(G))$. \label{cond:uniqueColour}
	\item For all $Q \in \{X, Y, Z\}$ and $q \in E(G)$, \label{cond:big}
	\begin{align*}
		\sum_{e \in E(G): \ell'(e) < \ell'(q)} |V_{C(Q)}(Q(e))| < |V_{C(Q)}(Q(q))|.
	\end{align*}
\end{enumerate}
Note that whether $S$ has a discriminating vertex colouring depends on the sequence $t$. For example, the sequence $t_i = 1$, for all $i$, causes condition $4$ to fail for sufficiently large $G$.

\begin{theorem}\label{thm:graphSub_general}
	Let $S$ be a $t$-sequence-subdivision of a graph $G$ with sequence $t$. Every discriminating vertex colouring of $S$ is anagram-free.
\end{theorem}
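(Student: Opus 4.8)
The plan is to assume for contradiction that some path $P$ in $S$ is an anagram with split $LR$, and then to produce a colour set $C'$ with $M_{C'}(L)\neq M_{C'}(R)$; by the multiset criterion for anagrams (and Lemma~\ref{lem:anagramFreeSubColor}) that suffices. First I would mine conditions (1) and (2) for structure. Restricting $P$ to the two original colours gives a nonempty word (by (2)) that, by Lemma~\ref{lem:anagramFreeSubColor}, is an anagram; since the original vertices of $P$ trace a properly $2$-coloured path of $G$, this word is alternating, and an alternating anagram on two symbols has length divisible by $4$. So the original vertices on $P$ form a path $P_G=u_1e_1u_2\cdots e_mu_{m+1}$ of $G$ with $m=2k-1$, $k$ even and $k\geq 2$; the split puts $u_1,\dots,u_k$ in $L$ and $u_{k+1},\dots,u_{2k}$ in $R$, so the midedge of $P$ lies on the path of $S$ replacing $e_k$. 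After orienting $P$ so that $u_1$ is white, $u_k$ and $u_{m+1}$ are black and $u_{k+1}$ is white. Each $e_i$ is \emph{contained} in $P$ (all its division vertices lie on $P$), and the only further edges of $G$ that $P$ \emph{hits} are at most two ``dangling'' edges: $\gamma_1$ incident to $u_1$ and $\gamma_2$ incident to $u_{m+1}$, each of which (if present and not itself contained) meets $P$ in an initial segment at $u_1$, resp.\ $u_{m+1}$, hence lies inside $X(\gamma_1)$, resp.\ $Z(\gamma_2)$, possibly running past it into $Y$.

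The engine of the contradiction is the following consequence of condition (4). For $Q\in\{X,Y,Z\}$ write $n_Q(e)=|V_{C(Q)}(Q(e))|$. Suppose $e$ is a hit edge with $Q(e)\subseteq V(P)$ lying entirely within $L$ (say), and every other hit edge $e'$ contributing a $C(Q)$-coloured vertex to $P$ has $\ell'(e')<\ell'(e)$. Since $C(Q)$-colours occur only on $Q$-paths (condition (3)), $|V_{C(Q)}(R)|\leq\sum_{\ell'(e')<\ell'(e)}n_Q(e')<n_Q(e)\leq|V_{C(Q)}(L)|$, the strict inequality being condition (4); hence $M_{C(Q)}(L)\neq M_{C(Q)}(R)$ and $P$ is not an anagram. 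The companion fact is purely geometric: if $e$ is contained in $P$, its division path is a contiguous stretch of $P$, so wherever the midedge falls (interior to $X(e)$, $Y(e)$, or $Z(e)$, or outside) at least one of the three segments lies wholly on one side of the split; and because the midedge is strictly interior to at most one segment, one may always choose such a wholly-one-sided segment while avoiding any one prescribed type.

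Next I would let $\alpha$ be the hit edge of largest $\ell'$-value and run a short case analysis. If $\alpha$ is contained — in particular if $\alpha\in\{e_1,\dots,e_m\}$, or if no dangling edge is hit — the engine applied via the geometric fact finishes at once. If $\alpha$ is a dangling edge whose division vertices on $P$ cover a whole segment ($X(\gamma_1)$ or $Z(\gamma_2)$), that segment lies on one side and the engine finishes. Otherwise $\alpha$'s division vertices on $P$ lie strictly inside one segment, so $\alpha$ contributes no $C(Q)$-vertex to $P$ for the other two types $Q$; I would then apply the same trichotomy to the second-largest hit edge $\beta$ using only those two types. If $\beta$ is contained, the geometric fact (choosing a segment not of the excluded type) plus the engine finish; if $\beta$ is the other dangling edge with a full-segment stretch, the engine finishes; and if $\beta$ too meets its own segment only strictly, then both dangling edges contribute nothing to the colour class $C(Y)$, so the largest edge $e_\mu$ of $P_G$ is the largest hit edge contributing to $C(Y)$, and applying the engine to $Y(e_\mu)$ finishes provided $Y(e_\mu)$ lies on one side — i.e.\ provided $e_\mu\neq e_k$.

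The crux, and the step I expect to be the real work, is ruling out $e_\mu=e_k$ in that last case; I would argue from the edge ordering $\ell'$ together with injectivity of the vertex labelling $\ell$. In this case one of the dangling edges, say the one incident to the white vertex $u_1$, beats every edge of $P_G$: comparing it with $e_1=u_1u_2$ is decided on black endpoints (both have white endpoint $u_1$), while comparing it with any $P_G$-edge having a different white endpoint forces $\ell(u_1)$ to exceed that endpoint's label, so $u_1$ carries the largest label among the white vertices of $P_G$. On the other hand, if $e_k$ were the largest edge of $P_G$ then, since $k\geq 2$ and $u_k$ is black we have $w(e_k)=u_{k+1}\neq u_1$, so $e_k$ would beat $e_1$ on white endpoints, forcing $\ell(u_{k+1})>\ell(u_1)$ — contradicting that $u_1$ has the largest white label. (When the largest hit edge is instead the dangling edge at the black vertex $u_{m+1}$, its white endpoint lies off $P$ and one tracks $\gamma_1$, which is then forced to be the second-largest hit edge, so the same comparison applies.) Once $e_\mu\neq e_k$ is established, $Y(e_\mu)$ lies on one side and the engine delivers the contradiction. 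The only other place needing care is the bookkeeping of hits versus contains and of which dangling edge sits at the white end versus the black end, which is routine once the orientation ``$u_1$ white'' is fixed.
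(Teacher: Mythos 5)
Your proposal is correct, and it rests on the same two pillars as the paper's proof: Condition~(4) forces the $\ell'$-maximal hit edge contributing to a colour class $C(Q)$ to dominate that class, so its $Q$-segment must straddle the midedge or else $|V_{C(Q)}(L)|\neq|V_{C(Q)}(R)|$; and the white-label-first definition of $\ell'$, applied to the edges incident with the endmost white original vertex, supplies the final contradiction. The organisation, however, is genuinely different. The paper works with the three \emph{per-type} maximal hit edges $x,y,z$, shows that exactly one of $X(x),Y(y),Z(z)$ is a subpath of $P$ and contains the midedge, and then produces a fully contained edge $\alpha\gamma$ at the endmost white vertex with $\ell'(\alpha\gamma)>\ell'(q)$, contradicting the maximality of $q$ outright; no case split on contained versus dangling edges is required. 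You instead order \emph{all} hit edges, run a trichotomy on the largest and second-largest, close each branch with an explicit multiset imbalance, and in the residual branch first pin the midedge to the division path of $e_k$ (via the equal split of the original vertices -- a step the paper never needs) and then rule out $e_\mu=e_k$ by showing $u_1$ carries the maximal white label. Your route is longer and more case-heavy, but it makes the source of the imbalance explicit in every case and isolates precisely where the lexicographic ordering is used; the paper's per-type maxima let a single maximality contradiction absorb all of your cases at once. One point worth making explicit if you write this up: your claim that the split of $P$ places exactly $k$ original vertices in each of $L$ and $R$ needs the observation (implicit in the proof of Lemma~\ref{lem:anagramFreeSubColor}) that $|M_{C'}(L)|=|M_{C'}(R)|$ for every colour class $C'$, not merely that the restricted word is an anagram.
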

\begin{proof}
	Let $\ell$ and $\ell'$ be the associated vertex and edge labellings of $G$. Let $f$ be a discriminating vertex colouring of $S$.
	
	Let $P$ be a path in $S$ and assume for the sake of contradiction that $P$ is an anagram. By Condition (\ref{cond:containOrig}), $V(P)$ contains at least one original vertex. Since $G$ is properly $2$-coloured, all subpaths of $G$ that are anagrams have order at least $4$. The $2$-colouring of $G$ is applied to the original vertices of $S$, so, by Lemma \ref{lem:anagramFreeSubColor}, $P$ contains at least four original vertices. Therefore $P$ has at least one subpath from each of $\mathcal{X}$, $\mathcal{Y}$, and $\mathcal{Z}$. Let $x, y, z \in E(G)$ be the edges maximizing $\ell'$ such that $V(P) \cap V(X(x)) \neq \emptyset$, $V(P) \cap V(Y(y)) \neq \emptyset$, and $V(P) \cap V(Z(z)) \neq \emptyset$. 
	
	A path, $P'$, \textit{partially intersects} $P$ if $V(P') \nsubseteq V(P)$ and $V(P') \cap V(P) \neq \emptyset$. There are most two paths in $\mathcal{X}$, $\mathcal{Y}$, and $\mathcal{Z}$ that partially intersect $P$ since every division vertex has degree $2$ in $S$. Therefore at least one of $X(x)$, $Y(y)$, and $Z(z)$ is a subpath of $P$. Define $q \in \{x, y, z\}$ and $Q \in \{X, Y, Z\}$ such that $Q(q) \in \{X(x), Y(y), Z(z)\}$ is a subpath of $P$. Since $f$ is a discriminating colouring
	\begin{align*}
		\sum_{e \in E(G): \ell'(e) < \ell'(q)} |V_{C(Q)}(Q(e))| < |V_{C(Q)}(Q(q))|.
	\end{align*}
	Therefore, by the maximality of $\ell'(q)$, there are more vertices in $Q(q)$ coloured by $C(Q)$ than there are vertices coloured by $C(Q)$ in the rest of of $P$. Thus $|V_{C(Q)}(Q(q))| > \frac{1}{2}|V_{C(Q)}(P)|$. Let $LR$ be the split of $P$. By Lemma \ref{lem:anagramFreeSubColor}, $V_{C(Q)}(L) = V_{C(Q)}(R) = \frac{1}{2}|V_{C(Q)}(P)|$ so both $L$ and $R$ intersect $Q(q)$. Therefore the midedge of $P$ is an edge of $Q(q)$. Since the midedge of $P$ is unique, exactly one of $X(x)$, $Y(y)$, and $Z(z)$ is a subpath of $P$.
	
	Since $G$ is properly $2$-coloured, every subpath of $G$ that is an anagram has a white endpoint and a black endpoint. Therefore one of the endmost original vertices of $P$ is white, call this vertex $\alpha$. Since $P$ partially intersects exactly two of $X(x)$, $Y(y)$, and $Z(z)$, there is a black vertex $\beta \in N_\alpha(G)$ such that $\alpha\beta \in \{x, y, z\}$, where $N_\alpha(G)$ is the neighbourhood of $\alpha$. Recall that both $L$ and $R$ contain at least two original vertices and the midpoint of $P$ is in $Q(q)$. Therefore neither endpoint of $q$ is an endmost original vertex of $P$, so $\alpha \neq w(q)$. Also, there is a black vertex, $\gamma \in N_\alpha(G)$, such that the division vertices of $\alpha\gamma$ are all in $P$. Since $\alpha\beta \in \{x, y, z\}$ and $\alpha\beta \neq q$, there is an $A \in \{X, Y, Z\}$ such $A(\alpha\beta) \in \{X(x), Y(y), Z(z)\}$, for some $A \neq Q$. Now, $\ell'(\alpha\beta) > \ell'(q)$ because $A(q)$ is a subpath of $P$ and $\ell'(\alpha\beta)$ is maximal. Therefore $\ell(\alpha) > \ell(w(q))$, so $\ell'(\alpha\gamma) > \ell'(q)$. This contradicts the maximality of $\ell'(q)$ because $Q(\alpha\gamma)$ is a subpath of $P$.
\end{proof}
We now use Theorem \ref{thm:graphSub_general} to prove Theorem \ref{thm:graphSub_constant_nice}.
\begingroup
\def\thetheorem{\ref{thm:graphSub_constant_nice}}
\begin{theorem}
	Every graph $G'$ has a $(\leq 3(2)^{2|E(G')| - 1})$-subdivision, $S$, with $\phi(S) \leq 14$.
\end{theorem}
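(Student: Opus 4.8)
The plan is to apply Theorem~\ref{thm:graphSub_general} with a carefully chosen sequence $t$ and a carefully chosen discriminating colouring. First I would fix a proper $2$-colouring of $G'$ and, using it, the vertex labelling $\ell$ (white vertices above black ones, ties broken arbitrarily) and the induced edge labelling $\ell'$, so that $|E(G')| =: m$ edges are indexed $1,\dots,m$. The natural choice forcing Condition~(\ref{cond:big}) is a geometrically growing sequence: take $t_i := 2^{i-1}$, so that edge $e$ with $\ell'(e) = i$ is subdivided $3\cdot 2^{i-1}$ times, and the largest edge gets $3\cdot 2^{m-1}$ division vertices, matching the claimed bound $3(2)^{2|E(G')|-1}$ once we double-check the exponent (the statement as printed has $2|E(G')|-1$, so I would instead take $t_i$ to grow like $4^{i-1}$ or re-examine the bookkeeping — see the obstacle paragraph below). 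Cut each subdivided edge's path into three consecutive blocks $X(e),Y(e),Z(e)$ of equal length $t_{\ell'(e)}$, with $X(e)$ at the white end and $Z(e)$ at the black end.

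Next I would build the colouring. Colour the original vertices by the proper $2$-colouring (white/black); these two colours are used nowhere else, giving Condition~(\ref{cond:blackWhite}). For the three families $\mathcal{X},\mathcal{Y},\mathcal{Z}$, assign to each a disjoint palette of $4$ new colours, $C(X),C(Y),C(Z)$, so that $2 + 4\cdot 3 = 14$ colours total, matching $\phi(S)\le 14$; this is Condition~(\ref{cond:uniqueColour}). Within each family, colour the division vertices of a single path $Q(e)$ by a long anagram-free word on its $4$ colours (as in the tree proofs, reading off the word according to position along the path, consistently oriented from white end to black end of the whole subdivision). I would then verify Condition~(\ref{cond:containOrig}): any anagram avoiding original vertices lies inside the division-vertex forest, whose components are paths coloured — within each of the three colour classes — by subwords of an anagram-free word, and Lemma~\ref{lem:anagramFreeSubColor} rules these out. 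Condition~(\ref{cond:big}) is the key inequality $\sum_{\ell'(e)<\ell'(q)} t_{\ell'(e)} < t_{\ell'(q)}$, which holds precisely because the $t_i$ grow geometrically with ratio at least $2$ (for ratio exactly $2$, $\sum_{i<j}2^{i-1} = 2^{j-1}-1 < 2^{j-1}$). With all four conditions met, Theorem~\ref{thm:graphSub_general} gives that the colouring is anagram-free, proving $\phi(S)\le 14$.

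The main obstacle is the exponent in the number of division vertices. With the simplest geometric sequence $t_i = 2^{i-1}$ the largest edge has only $3\cdot 2^{m-1}$ division vertices, not $3\cdot 2^{2m-1}$; so either the theorem is stated with slack (the construction does better than claimed and the $2m-1$ is a convenient over-estimate that also covers, e.g., the case where one wants the $k$-subdivision via Lemma~\ref{lem:sub_chrom_4}, or leaves room for a cleaner later optimization), or there is a genuine need for the larger exponent because Condition~(\ref{cond:big}) must be checked per family: within $\mathcal{X}$ alone the edges with smaller $\ell'$ could, together, have up to $\sum_{i<j} t_i$ vertices in $C(X)$, which is fine, but one must also confirm that \emph{partially} intersecting paths from other families do not disturb the count — that is handled structurally in Theorem~\ref{thm:graphSub_general}'s proof rather than by the inequality, so it does not affect the choice of $t$. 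I would resolve the discrepancy by simply taking $t_i$ large enough that $3t_m \le 3\cdot 2^{2m-1}$ while still growing by a factor $\ge 2$ each step (any ratio in $[2,4]$ works, e.g. $t_i = 2^{2(i-1)}$ gives $3t_m = 3\cdot 4^{m-1} = 3\cdot 2^{2m-2} \le 3\cdot 2^{2m-1}$), which trivially satisfies Condition~(\ref{cond:big}); then invoke Theorem~\ref{thm:graphSub_general} and Lemma~\ref{lem:sub_chrom_4} if the $k$-subdivision (rather than a $(\le k)$-subdivision) is wanted. The remaining work is routine verification of the four conditions against these concrete choices.
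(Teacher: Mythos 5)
Your plan follows the paper's broad strategy (geometric sequence $t_i$, disjoint $4$-colour palettes for $\mathcal{X},\mathcal{Y},\mathcal{Z}$, anagram-free words on each block, then Theorem~\ref{thm:graphSub_general}), but it has a genuine gap at the very first step: you ``fix a proper $2$-colouring of $G'$''. An arbitrary graph $G'$ (e.g.\ $K_3$) has no proper $2$-colouring, and the entire $t$-sequence-subdivision machinery --- the labelling $\ell$ with white vertices above black ones, the functions $w(e)$ and $b(e)$, and the structural argument in the proof of Theorem~\ref{thm:graphSub_general} --- is only defined for bipartite $G$. The paper resolves this by first passing to the $1$-subdivision $G$ of $G'$, which is always bipartite, and then taking a $t$-sequence-subdivision of $G$; the resulting graph is still a subdivision of $G'$. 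Without this (or some equivalent reduction to a bipartite graph) your construction is not defined and Theorem~\ref{thm:graphSub_general} cannot be invoked.

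This same step also dissolves the exponent ``discrepancy'' you spent your final paragraph worrying about. The graph $G$ has $2|E(G')|$ edges, so with the sequence $t_i = 2^{i-1}$ (exactly your first choice) the largest edge receives $3t_{2|E(G')|} = 3\cdot 2^{2|E(G')|-1}$ division vertices, which is precisely the bound in the statement; there is no slack to explain away and no need to inflate the ratio to $4$. Your verifications of Conditions~(\ref{cond:blackWhite})--(\ref{cond:big}) are otherwise in order and match the paper's, so the proof is repaired simply by inserting the $1$-subdivision step at the start and deleting the speculation about the exponent.
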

\addtocounter{theorem}{-1}
\endgroup

\begin{proof}
	Let $G$ be the $1$-subdivision of $G'$, note that $G$ has a proper $2$-colouring. Define the sequence $t$ by $t_i = 2^{i-1}$ for $i \geq 1$. Let $S$ be a $t$-sequence-subdivision of $G$. Since $G$ has $2|E(G')|$ edges and $3t_{2|E(G')|} = 3(2)^{2|E(G')| - 1}$, $S$ satisfies the bound on division vertices per edge required by the theorem. Let $\ell$ and $\ell'$ be the associated vertex and edge labellings of $G$.
	
	Let $f$ be the vertex colouring of $S$ defined as follows. Colour the original vertices of $S$ with the proper $2$-colouring of $G$ that corresponds to $\ell$. Assign a disjoint set of four colours to each of $\mathcal{X}$, $\mathcal{Y}$ and $\mathcal{Z}$. Colour each of the paths in $\mathcal{X}$, $\mathcal{Y}$ and $\mathcal{Z}$ with an anagram-free $4$-colouring with their assigned set of four colours.
	
	We now show that $f$ is discriminating. Conditions (\ref{cond:blackWhite}) and (\ref{cond:uniqueColour}) are satisfied trivially. Condition (\ref{cond:containOrig}) is satisfied because each of the paths in $\mathcal{X}$, $\mathcal{Y}$ and $\mathcal{Z}$ is anagram-free and they use their own set of colours so every anagram in $S$ contains an original vertex. Condition (\ref{cond:big}) is satisfied because for all $Q \in \{X, Y, Z\}$ and $q \in E(G)$, $|V_{C(Q)}(Q(q))| = |V(Q(q))|$, and
	\begin{align*}
		\sum_{e \in E(G) : \ell'(e) < \ell'(q)} |V(Q(e))| = 2^{\ell'(q) - 2} + \dots + 1 =  2^{\ell'(q) - 1} - 1 < 2^{\ell'(q) - 1} = |V(Q(q))|.
	\end{align*}
	Therefore $f$ is an anagram-free $14$-colouring of $S$.
\end{proof}

We use Theorem \ref{thm:graphSub_constant_nice} to bound $\phi$ on subdivisions of graphs in terms of division vertices per edge.
\begin{theorem}\label{thm:graphSub_chromatic}
	For every graph $G$ and $k \in \mathbb{Z}^+$ there exists a $(\leq 3(4)^{\lceil|E(G)|/k\rceil})$-subdivision, $S$, of $G$ with $\phi(S) \leq 2 + 12k$.
\end{theorem}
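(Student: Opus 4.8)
The plan is to reduce Theorem~\ref{thm:graphSub_chromatic} to Theorem~\ref{thm:graphSub_constant_nice} by partitioning the edge set of $G$ into $k$ groups and subdividing group by group. First I would fix an arbitrary partition $E(G) = E_1 \cup \dots \cup E_k$ with each $|E_i| \leq \lceil |E(G)|/k \rceil$. For each $i$, let $H_i$ be the graph with vertex set $V(G)$ and edge set $E_i$. Applying Theorem~\ref{thm:graphSub_constant_nice} to each $H_i$ yields a subdivision in which every edge of $H_i$ is subdivided at most $3 \cdot 2^{2|E_i| - 1} - 1 \leq 3 \cdot 2^{2\lceil|E(G)|/k\rceil - 1} - 1 < 3 \cdot 4^{\lceil|E(G)|/k\rceil}$ times, and which admits an anagram-free $14$-colouring. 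Superimposing these $k$ subdivisions (each edge $e \in E_i$ gets subdivided according to the $i$-th application) produces the desired $(\leq 3 \cdot 4^{\lceil|E(G)|/k\rceil})$-subdivision $S$ of $G$.

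Next I would assemble a colouring of $S$. The division vertices coming from the $i$-th application form internally-disjoint paths, and Theorem~\ref{thm:graphSub_constant_nice} colours the whole of that $i$-th subdivision with $14$ colours; of those, $2$ colours are reserved for the original vertices (the proper $2$-colouring) and $12$ are used on the division vertices. I would keep the shared $2$-colouring of $V(G)$ for all original vertices, and for the division vertices arising from group $i$ I would use a private palette of $12$ fresh colours $\{2 + 12(i-1) + 1, \dots, 2 + 12i\}$ obtained by recolouring the $12$ "division" colours of the $i$-th application. This uses $2 + 12k$ colours in total, as required.

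The crux is showing that this colouring of $S$ is anagram-free, and here Lemma~\ref{lem:anagramFreeSubColor} does the work. Let $P$ be a path in $S$. If $P$ contains an original vertex, then $P$ restricted to the $16$-element colour set $C_i$ consisting of the $2$ original colours together with the $12$ private colours of group $i$ — where $i$ is chosen so that $P$ uses some colour of group $i$ while traversing an edge that was actually subdivided in the $i$-th application containing an original endpoint of $P$ — corresponds to a path in the $14$-coloured $i$-th subdivision of $H_i$ (after un-renaming the palette), hence is not an anagram; then Lemma~\ref{lem:anagramFreeSubColor} gives that $P$ is not an anagram. If $P$ contains no original vertex, then $P$ lies entirely within division vertices of a single group $i$ (in fact within a single subdivided edge, since distinct subdivided edges meet only at original vertices), so $P$ restricted to the private palette of that group is a subpath of an anagram-free colouring of a path, hence not an anagram.

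The step I expect to need the most care is pinning down which group $i$ to restrict to in the first case: a path $P$ through an original vertex $v$ may simultaneously traverse edges from several groups, so I must argue that there is some group $i$ such that the colour classes of group $i$ that actually appear on $P$, together with the two original colours, form a restriction of $P$ that is isomorphic as a coloured path to an honest path in the $i$-th subdivision of $H_i$. This should follow because, on edges not in $E_i$, $P$ contributes no colours of group $i$ and original vertices keep their colours, so restricting to $C_i$ simply "contracts away" the other groups' division vertices — exactly the kind of operation $\omega_{C_i}$ performs — leaving a walk in the $i$-th subdivision; one then checks it is in fact a path. Once that correspondence is set up, anagram-freeness is inherited directly from Theorem~\ref{thm:graphSub_constant_nice} via Lemma~\ref{lem:anagramFreeSubColor}.
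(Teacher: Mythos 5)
Your construction is the one the paper intends (its own proof is only a three\-/sentence sketch): partition $E(G)$ into $k$ classes of size at most $\ceil{|E(G)|/k}$, apply Theorem~\ref{thm:graphSub_constant_nice} to each class, share the two ``original'' colours and give each class a private $12$-colour palette. The counts are right: $3\cdot 2^{2\ceil{|E(G)|/k}-1}\leq 3\cdot 4^{\ceil{|E(G)|/k}}$ division vertices per edge and $2+12k$ colours. The gap is in the step you yourself flagged as delicate. Your claim that $\omega_{C_i}(P)$ ``corresponds to a path in the $14$-coloured $i$-th subdivision of $H_i$'' is false whenever $P$ traverses edges from more than one class, for two reasons. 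First, in Theorem~\ref{thm:graphSub_constant_nice} the two original colours are placed on the original vertices of the \emph{$1$-subdivision} of $H_i$, i.e.\ on $V(G)$ \emph{and} on the midpoint division vertices; after merging, the midpoints of every class carry the shared black colour, so the midpoints of classes $j\neq i$ are \emph{not} contracted away by $\omega_{C_i}$. Second, even ignoring the midpoints, two vertices of $V(G)$ joined by an edge of $E_j$, $j\neq i$, become consecutive in $\omega_{C_i}(P)$, and no path (or walk) in the $i$-th subdivision visits two white vertices consecutively --- indeed $H_i$ need not even contain that edge. So the restricted word is not realised by any path in the $i$-th subdivision, and its non-anagram-ness cannot be ``inherited'' from Theorem~\ref{thm:graphSub_constant_nice} via Lemma~\ref{lem:anagramFreeSubColor}. (A smaller slip of the same kind occurs in your no-original-vertex case: such a $P$ may straddle a midpoint, so its restriction to the private palette is a concatenation of pieces of two anagram-free words, not a subword of one; the correct statement there is simply that $P$ is verbatim a path of the $i$-th subdivision.)

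The statement is still true with your construction, but the cross-class paths need a direct argument in the spirit of Theorem~\ref{thm:graphSub_general} rather than a reduction. Sketch: if every division vertex of an alleged anagram $P$ lies in edges of a single class $E_i$, then $P$ literally is a path of the $i$-th subdivision (same colours up to renaming) and Theorem~\ref{thm:graphSub_constant_nice} applies with no restriction needed. Otherwise, restricting to $\{\text{white},\text{black}\}$ shows $P$ contains at least four original/midpoint vertices, hence fully contains at least three half-edges; since its division vertices meet at least two classes, $P$ meets at least four of the $3k$ families $\{Q_i(e)\}$. But at most two of these families can be met only partially (one at each end of $P$), and condition~(\ref{cond:big}) applied within a single class forces the midedge of $P$ into the $\ell'_i$-maximal fully contained member of any family whose maximal member is contained --- which can happen for at most one family. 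That accounts for at most three families, a contradiction. Without some argument of this shape, the proposal does not establish anagram-freeness.
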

\begin{proof}
	Take $k$ subgraphs of $G$ with an equitable number of edges per subgraph. Subdivide them and colour them using Theorem \ref{thm:graphSub_constant_nice}. Merge these subdivisions to obtain an anagram-free $2 + 12k$ colouring of $G$.
\end{proof}

We now optimize our use of Theorem \ref{thm:graphSub_general} to improve the upper bound on $\phi$.
\begingroup
\def\thetheorem{\ref{thm:graphSub_constant_optimized}}
\begin{theorem}
	Every graph $G'$ has a $\lr{\leq 45\lr{1 + \frac{75}{9}}^{2|E(G')| - 1}}$-subdivision, $S$, with $\phi(S) \leq 8$.
\end{theorem}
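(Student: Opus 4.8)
The plan is to reuse the machinery of Theorem~\ref{thm:graphSub_general}, but to colour the three families $\mathcal{X}$, $\mathcal{Y}$, $\mathcal{Z}$ more economically than in the proof of Theorem~\ref{thm:graphSub_constant_nice}, where we spent a separate block of four colours on each family (twelve colours total) plus two colours for the original vertices. The key observation is that Condition~(\ref{cond:uniqueColour}) only asks for a \emph{nonempty} set of colours $C(Q)$ that occurs \emph{only} on $Q(E(G))$; it does not require these sets to be pairwise disjoint as blocks of four, nor does it require the colouring of each path in $Q(E(G))$ to use only the colours of $C(Q)$. So I would let all three families share a common pool of colours, reserving just one or two private colours per family to serve as the ``distinguishing'' set $C(Q)$, and pad each path in $Q(e)$ so that enough vertices carry a colour from $C(Q)$ to make Condition~(\ref{cond:big}) hold. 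Concretely: colour the original vertices with the proper $2$-colouring of $G$ (the $1$-subdivision of $G'$, as before); then partition the remaining $8-2=6$ colours so that, say, colours $\{3\}$, $\{4\}$, $\{5\}$ are the private singletons $C(X)$, $C(Y)$, $C(Z)$, and $\{6,7,8\}$ is a shared pool of ordinary colours. Each path $X(e)$ is coloured by an anagram-free word over $\{3\}\cup\{6,7,8\}$ in which a fixed positive fraction of the letters equal $3$ — e.g.\ insert the letter $3$ at every position whose index lies in a sufficiently sparse arithmetic progression, interleaving anagram-free blocks over $\{6,7,8\}$ between consecutive $3$'s — and symmetrically for $Y(e)$ with $4$ and for $Z(e)$ with $5$. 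Since $3,4,5,6,7,8$ are new colours not used on the original vertices, Conditions~(\ref{cond:blackWhite}) and~(\ref{cond:containOrig}) are immediate (every anagram that avoids $\{1,2\}$ lives inside a single anagram-free path of $\mathcal{X}\cup\mathcal{Y}\cup\mathcal{Z}$, impossible), and (\ref{cond:uniqueColour}) holds by construction.

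The remaining work is to choose the sequence $t$ so that Condition~(\ref{cond:big}) holds while keeping the number of division vertices per edge as small as the stated bound allows. Suppose the colouring scheme guarantees $|V_{C(Q)}(Q(e))| \geq \rho\, t_{\ell'(e)}$ for some fixed rational $\rho>0$ determined by how densely we can sprinkle the private colour while keeping the intervening blocks anagram-free on three symbols (this is where the constant $75/9$ will come from — roughly, a word of length $L$ on $\{6,7,8\}$ forces only a bounded number of private-colour insertions, so $\rho$ is the reciprocal of that block length, up to small corrections). Writing $|V_{C(Q)}(Q(e))| \leq t_{\ell'(e)}$ trivially on the other side, Condition~(\ref{cond:big}) is implied by
\begin{align*}
  \sum_{j < i} t_j < \rho\, t_i,
\end{align*}
which forces a geometric sequence: taking $t_i = 45\,(1+1/\rho)^{\,i-1}$ with $1/\rho = 75/9$ works, since then $\sum_{j<i} t_j = 45\cdot\frac{(1+1/\rho)^{i-1}-1}{1/\rho} < \rho\, t_i$, and the leading constant $45$ absorbs the additive slack (the ``$+1$'' terms and the need for $|V_{C(Q)}(Q(e))|$ to be an integer at least as large as a prescribed value, forcing $t_i$ to be a multiple of the block length). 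Since $G$ has $2|E(G')|$ edges, the largest edge receives $3\,t_{2|E(G')|} = 45\,(1+75/9)^{\,2|E(G')|-1}$ division vertices after splitting the path into $X(e)$, $Y(e)$, $Z(e)$ of equal length — matching the claimed bound. Theorem~\ref{thm:graphSub_general} then gives that the discriminating colouring is anagram-free, completing the proof.

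The main obstacle, and the only genuinely delicate point, is verifying the density estimate: one must produce, for every target length, an anagram-free word on $\{6,7,8\}$ together with a placement of the private letter that simultaneously (i) keeps the whole word anagram-free, (ii) guarantees the $\rho\, t_{\ell'(e)}$ lower bound on the count of the private letter, and (iii) keeps $|V(Q(e))| = t_{\ell'(e)}$ on the nose so that the three subpaths tile the replacement path for $e$ exactly. The cleanest way to handle this is probably to fix a single ``unit gadget'' — a short anagram-free word on $\{6,7,8\}$ followed by one copy of the private colour — of length exactly $75/9 + 1$ worth of vertices (after clearing denominators, a block of $84$ vertices, $75$ of them from $\{6,7,8\}$ in an anagram-free arrangement and $9$ of them the private colour, or whatever exact integers the three-symbol anagram-free word length forces), and then let $t_{\ell'(e)}$ be an appropriate multiple of that block length so that $Q(e)$ is a concatenation of whole gadgets; the constant $45$ in the statement is exactly what is needed to round $t_1$ up to a legal multiple and still satisfy the geometric recursion. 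Once the arithmetic of this single gadget is pinned down, everything else is bookkeeping and an appeal to Theorem~\ref{thm:graphSub_general}.
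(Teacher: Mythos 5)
Your high-level strategy matches the paper's: pass to the $1$-subdivision, invoke Theorem~\ref{thm:graphSub_general}, let the three families share most of the colour budget while reserving one private colour per family as $C(Q)$, and drive Condition~(\ref{cond:big}) with a geometric sequence whose ratio is governed by the ratio of an upper density bound to a lower density bound on the private colours (this is exactly where $75/9 = (5/9)/(1/15)$ comes from in the paper). The gap is in the step you yourself flag as delicate: actually producing the words. Your ``cleanest'' proposal --- tiling $Q(e)$ with copies of a fixed unit gadget consisting of a short anagram-free word on $\{6,7,8\}$ followed by one private letter --- fails outright, because two consecutive copies of the same gadget form a square, hence an anagram containing no original vertex, violating Condition~(\ref{cond:containOrig}). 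More generally, interleaving anagram-free blocks on $\{6,7,8\}$ with sparse private letters does not make the concatenation anagram-free (and the blocks cannot exceed length $7$ anyway, that being the maximum length of an anagram-free word on three symbols). A second unaddressed point: Condition~(\ref{cond:containOrig}) requires the \emph{entire} replacement path of each edge $e$, i.e.\ the concatenation of $X(e)$, $Y(e)$ and $Z(e)$, to be anagram-free, not just each third separately; colouring the thirds by independent words over the alphabets $\{3,6,7,8\}$, $\{4,6,7,8\}$, $\{5,6,7,8\}$ gives you no control over anagrams straddling two or three of them.

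The paper's fix for both problems is the one idea missing from your sketch: colour the whole path $P_e$ (all $3t_{\ell'(e)}$ division vertices) by a single long anagram-free word on $\{1,2,3,4\}$ (Ker\"anen), and then split the occurrences of the symbol $4$ into three colours $4$, $5$, $6$ according to whether the vertex lies in $X(e)$, $Y(e)$ or $Z(e)$. Refining a colouring this way preserves anagram-freeness (merging the refined colours back turns any anagram into an anagram of the original word), so Condition~(\ref{cond:containOrig}) holds for free, and the two density bounds come from elementary facts about anagram-free words on four symbols: no symbol occurs twice in a row (upper density at most $5/9$ for the relevant lengths), and every window of length $8$ uses all four symbols (lower density at least $1/15$). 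If you replace your gadget construction with this refinement trick, the rest of your outline --- the geometric recursion and the appeal to Theorem~\ref{thm:graphSub_general} --- goes through essentially as you describe.
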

\addtocounter{theorem}{-1}
\endgroup

\begin{proof}
	Let $G$ be the $1$-subdivision of $G'$, note that $G$ has a proper $2$-colouring. Define the sequence $t$ with $t_1 = 8$ and
	\begin{align}\label{eqn:graphSub_sequence}
	t_n = 15 + \floor{\frac{25}{3}\sum_{i=1}^{n-1} t_i}.
	\end{align}
	Let $S$ be a $t$-sequence-subdivision of $G$. It is straightforward to verify that $t_n \leq 15\lr{1 + \frac{75}{9}}^{n - 1}$ so $S$ satisfies the limit on division vertices per edge required by the theorem. Let $\ell$ and $\ell'$ be the associated vertex and edge labellings of $G$.
	
	Define the colouring $f:V(S) \to \{1,2,3, 4, 5, 6, \text{white}, \text{black}\}$ as follows. Original vertices are coloured white or black according to $\ell$. For every $e \in E(G)$ define $P_e = v_1\dots v_{3t_{\ell'(e)}}$ to be the division vertices of $e$. Let $W$ be an anagram-free word on $\{1,2,3,4\}$ of length $3\ell'(e)$ and colour $P_e$ as follows. For all $v_i \in V(P_e)$, if $W_i \in \{1,2,3\}$ then $f(v_i) := W_i$. Otherwise, $f(v_i) := 4$ if $v_i \in V(X(e))$, $f(v_i) := 5$ if $v_i \in V(Y(e))$, and $f(v_i) := 6$ if $v_i \in V(Z(e))$. 
	
	We now show that $f$ is discriminating. Condition (\ref{cond:blackWhite}) is satisfied trivially. Condition (\ref{cond:containOrig}) is satisfied because $P_e$ is coloured by an anagram-free word for all $e \in E(G)$. Condition (\ref{cond:uniqueColour}) is satisfied by $C(X) = \{4\}$, $C(Y) = \{5\}$, and $C(Z) = \{6\}$. We now show that Condition (\ref{cond:big}) is satisfied.
	
	Let $Q \in \{X, Y, Z\}$ and $q \in E(G)$. The same symbol cannot occur twice in a row so $|V_{C(Q)}(Q(q))| \leq \frac{5}{9}|V(Q(q))|$, since $|V(Q(q))| \geq 8$. Therefore
	\begin{align*}
		\sum_{e \in E(G) : \ell'(e) < \ell'(q)} |V_{C(Q)}(Q(e))| \leq \frac{5}{9}\sum_{e \in E(G) : \ell'(e) < \ell'(q)} |V(Q(e))|.
	\end{align*}
	Every anagram-free word of length $8$ contains at least four distinct symbols. Therefore $|V_{C(Q)}(Q(q))| \geq \frac{1}{15}|V(Q(q))|$. By (\ref{eqn:graphSub_sequence})
	\begin{align*}
		 \frac{5}{9}\sum_{e \in E(G) : \ell'(e) < \ell'(q)} |V(Q(e))| =  \frac{5}{9}\sum_{i=1}^{n-1} t_i \leq \frac{1}{15}t_n - 1.
	\end{align*}
	Therefore
	\begin{align*}
		 \sum_{e \in E(G) : \ell'(e) < \ell'(q)} |V_{C(Q)}(Q(e))| \leq \frac{5}{9}\sum_{i=1}^{n-1} t_i < \frac{1}{15}t_n = \frac{1}{15}|V(Q(e))| \leq |V_{C(Q)}(Q(q))|.
	\end{align*}
	Thus condition $4$ is satisfied so $f$ is an anagram-free $8$-colouring of $S$.
\end{proof}
Theorem \ref{thm:graphSub_constant_optimized} uses simple bounds on the density of symbols in anagram-free words. Better bounds on density would improve the base of the bound in Theorem \ref{thm:graphSub_constant_optimized}.

\subsection{Subdivisions of complete graphs}
Recall that $\mathcal{M}_{k,c}$ is the set of colour multisets on $c$ symbols of size $k$ and that $\mathcal{M}_{\leq k,c}$ is the set of colour multisets of $c$ symbols of size at most $k$.

\begingroup
\def\thetheorem{\ref{thm:K_n_anagram_lowerBound}}
\begin{theorem}
	Let $S$ be a $\lr{\leq k}$-subdivision of $K_n$. If $S$ is anagram-free $c$-colourable then
	\begin{align*}
		k \geq \lr{c!\lr{\frac{n}{c} - 1}}^{1/c} - c.
	\end{align*}
\end{theorem}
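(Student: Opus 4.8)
The plan is to show that anagram‑free $c$‑colourability of $S$ forces $n\leq c\bigl(\binom{k+c}{c}+1\bigr)$, which is equivalent to the stated inequality: since $\binom{k+c}{c}=\frac{(k+1)(k+2)\cdots(k+c)}{c!}\leq\frac{(k+c)^{c}}{c!}$, this bound gives $c!\bigl(\tfrac nc-1\bigr)\leq(k+c)^{c}$, and taking $c$‑th roots yields $k\geq\bigl(c!(\tfrac nc-1)\bigr)^{1/c}-c$ (the inequality being trivial when $n\leq c$). Recall also $\binom{k+c}{c}=|\mathcal{M}_{\leq k,c}|$.

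First I would pass to a monochromatic clique: some colour, say $1$, occurs on a set $V'\subseteq V(K_n)$ with $|V'|\geq n/c$, and I fix a pivot $v^{*}\in V'$. For each $a\in V'\setminus\{v^{*}\}$ write $D_{v^{*}a}$ for the set of division vertices of the edge $v^{*}a$ of $K_n$ in $S$, and set $\mu(a):=M(D_{v^{*}a})\in\mathcal{M}_{\leq k,c}$ (equivalently $M$ of the $v^{*}a$‑path with the two colour‑$1$ end vertices removed). The target is to prove that $a\mapsto\mu(a)$ is injective on $V'\setminus\{v^{*}\}$, which gives $\tfrac nc-1\leq|V'|-1\leq|\mathcal{M}_{\leq k,c}|=\binom{k+c}{c}$, exactly the desired bound.

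The engine is the following observation, which produces an explicit anagram from two vertices whose signatures differ by one copy of colour $1$: if distinct $a,b\in V'\setminus\{v^{*}\}$ satisfy $\mu(a)=\mu(b)+\{1\}$, then $S$ is not anagram‑free. The subpath $P$ running from $a$ through $v^{*}$ to $b$ has $|D_{v^{*}a}|+|D_{v^{*}b}|+3=2|D_{v^{*}b}|+4$ vertices, hence an even order; cutting $P$ at the edge of the subdivided $v^{*}a$ incident with $v^{*}$ gives $L=\{a\}\cup D_{v^{*}a}$ and $R=\{v^{*}\}\cup D_{v^{*}b}\cup\{b\}$, and here $|L|=1+|D_{v^{*}a}|=2+|D_{v^{*}b}|=|R|$, so $LR$ is the split, with $M(L)=\{1\}+\mu(a)=\{1,1\}+\mu(b)=M(R)$. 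Thus $P$ is an anagram, and consequently no two signatures differ by a single copy of colour $1$.

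The remaining task is to strengthen the statement that no two signatures differ by $\{1\}$ into the statement that all signatures are distinct, and this is the step I expect to be the main obstacle: equal signatures are genuinely harmless for the construction above, since two vertices with $\mu(a)=\mu(b)$ only give an odd (hence non‑anagram) path through $v^{*}$. To close this gap I would enlarge the family of anagrams considered — for instance using a third vertex of $V'$ to correct parity and form an anagram from a length‑$3$ path, or, more systematically, replacing the per‑vertex signature by the prefix‑multiset statistic of a fixed path $v^{*}=u_0-u_1-\cdots-u_{|V'|-1}$ through $V'$ (a subpath $u_i\cdots u_j$ split at $u_l$ is an anagram precisely when $2A(l)=A(i)+A(j)+\{1\}$, where $A(t)=M(u_0\cdots u_t)$), and then running a chain/pigeonhole argument inside $\mathcal{M}_{\leq k,c}$. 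Once injectivity, or an equivalent count, is established, the computation of the second paragraph finishes the proof.
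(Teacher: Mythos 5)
Your overall strategy matches the paper's: pass to a monochromatic vertex class, assign to each edge the colour multiset of its division vertices, and pigeonhole against $|\mathcal{M}_{\leq k,c}|=\binom{k+c}{c}\leq (k+c)^c/c!$. Your arithmetic reduction of the bound is also correct. But the step you flag as ``the main obstacle'' is exactly the step that carries the proof, and it remains open in your write-up. The engine you do prove --- an anagram when $\mu(a)=\mu(b)+\{1\}$ --- is not what your counting needs: injectivity of $\mu$ requires forbidding $\mu(a)=\mu(b)$, and your proposed repairs (a third vertex to fix parity, or a prefix-multiset chain argument) are sketches, not arguments.

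The missing idea is simply that the anagram path need not terminate at original vertices of $K_n$. If $a$ and $b$ are distinct neighbours of $v^{*}$ in the monochromatic clique with $\mu(a)=\mu(b)$, take the path $P$ consisting of $a$, then $D_{v^{*}a}$, then $v^{*}$, then $D_{v^{*}b}$, \emph{stopping before $b$}. Since $|D_{v^{*}a}|=|D_{v^{*}b}|$, this path has even order $2+2|D_{v^{*}a}|$, and its split is $L=\{a\}\cup D_{v^{*}a}$, $R=\{v^{*}\}\cup D_{v^{*}b}$. As $a$ and $v^{*}$ share a colour, $M(L)=\{1\}+\mu(a)=\{1\}+\mu(b)=M(R)$, so $P$ is an anagram. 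This is precisely the path $uP_\alpha vP_\beta$ used in the paper (there phrased as: some vertex of the monochromatic clique is incident with two edges bearing the same multiset, because more than $x/2$ equally-coloured edges cannot form a matching on $x$ vertices). With this observation, equal signatures are forbidden, $\mu$ is injective, and your count $\frac{n}{c}-1\leq\binom{k+c}{c}$ closes the proof; your ``differ by $\{1\}$'' case can be discarded entirely.
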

\addtocounter{theorem}{-1}
\endgroup
\begin{proof}
	Suppose for the sake of contradiction that 
	\begin{align}\label{eqn:dsubdibision}
		k < \lr{c!\lr{\frac{n}{c} - 1}}^{1/c} - c.
	\end{align}
	Fix an anagram-free colouring of $S$. Colour each edge $e \in E(K_n)$ with the colour multiset of the subdivision vertices of $e$ in $S$ and colour each vertex of $K_n$ with its colour in $S$. Note that there are
	\begin{align*}
		|\mathcal{M}_{\leq k,c}| = \sum^{k}_{i = 0} \binom{i + c - 1}{c - 1} = \binom{k + c}{c} \leq \frac{(k+c)^c}{c!}
	\end{align*}
	possibilities for the colour of each edge. Let $x := \ceil{n/c}$ and $G$ be a vertex-monochromatic $K_x$ subgraph of $K_n$. Note that
	\begin{align*}
		|E(G)| = \frac{x}{2}(x - 1) \geq \frac{x}{2}\lr{\frac{n}{c} - 1}.
	\end{align*}
	Therefore, by (\ref{eqn:dsubdibision})
	\begin{align*}
		|E(G)| \geq \frac{x}{2}\lr{\frac{n}{c} - 1} > \frac{x}{2}\frac{(k+c)^c}{c!} \geq \frac{x}{2}|\mathcal{M}_{\leq k, c}| \geq \frac{x}{2}\#\text{colours}.
	\end{align*}
	So there is a set of more than $x/2$ edges that have the same colour. Therefore there is a vertex, $v \in V(G)$, that is incident with at least two edges, $\alpha, \beta \in E(G)$, with the same colour. Let $u$ be the other endpoint of $\alpha$, $P_\alpha$ be the path induced by the division vertices of $\alpha$, and $P_\beta$ be the path induced by the division vertices of $\beta$. Then $u P_\alpha v P_\beta$ is an anagram in $S$.
\end{proof}

\def\soft#1{\leavevmode\setbox0=\hbox{h}\dimen7=\ht0\advance \dimen7
  by-1ex\relax\if t#1\relax\rlap{\raise.6\dimen7
  \hbox{\kern.3ex\char'47}}#1\relax\else\if T#1\relax
  \rlap{\raise.5\dimen7\hbox{\kern1.3ex\char'47}}#1\relax \else\if
  d#1\relax\rlap{\raise.5\dimen7\hbox{\kern.9ex \char'47}}#1\relax\else\if
  D#1\relax\rlap{\raise.5\dimen7 \hbox{\kern1.4ex\char'47}}#1\relax\else\if
  l#1\relax \rlap{\raise.5\dimen7\hbox{\kern.4ex\char'47}}#1\relax \else\if
  L#1\relax\rlap{\raise.5\dimen7\hbox{\kern.7ex
  \char'47}}#1\relax\else\message{accent \string\soft \space #1 not
  defined!}#1\relax\fi\fi\fi\fi\fi\fi}


\begin{thebibliography}{17}
\providecommand{\natexlab}[1]{#1}
\providecommand{\url}[1]{\texttt{#1}}
\expandafter\ifx\csname urlstyle\endcsname\relax
  \providecommand{\doi}[1]{doi: #1}\else
  \providecommand{\doi}{doi: \begingroup \urlstyle{rm}\Url}\fi

\bibitem[Alon et~al.(2002)Alon, Grytczuk, Ha{\l}uszczak, and
  Riordan]{alon2002nonrepetitive}
Noga Alon, Jaros{\l}aw Grytczuk, Mariusz Ha{\l}uszczak, and Oliver Riordan.
\newblock Nonrepetitive colorings of graphs.
\newblock \emph{Random Structures \& Algorithms}, 21\penalty0 (3-4):\penalty0
  336--346, 2002.

\bibitem[Bar{\'a}t and Wood(2008)]{barat2008notes}
J{\'a}nos Bar{\'a}t and David~R Wood.
\newblock Notes on nonrepetitive graph colouring.
\newblock \emph{Electron. J. Combin}, 15\penalty0 (1):\penalty0 R99, 2008.

\bibitem[Bre{\v{s}}ar et~al.(2007)Bre{\v{s}}ar, Grytczuk, Klav{\v{z}}ar,
  Niwczyk, and Peterin]{brevsar2007nonrepetitivetree}
Bo{\v{s}}tjan Bre{\v{s}}ar, Jaros{\l}aw Grytczuk, Sandi Klav{\v{z}}ar, Staszek
  Niwczyk, and Iztok Peterin.
\newblock Nonrepetitive colorings of trees.
\newblock \emph{Discrete Mathematics}, 307\penalty0 (2):\penalty0 163--172,
  2007.

\bibitem[Britten and Davidson(1971)]{britten1971repetitive}
Roy~J Britten and Eric~H Davidson.
\newblock Repetitive and non-repetitive {DNA} sequences and a speculation on
  the origins of evolutionary novelty.
\newblock \emph{Quarterly Review of Biology}, 46\penalty0 (2):\penalty0
  111--138, 1971.

\bibitem[Cummings(1996)]{cummings1996strongly}
Larry~J Cummings.
\newblock Strongly square-free strings on three letters.
\newblock \emph{Australasian J. Combinatorics}, 14:\penalty0 259--266, 1996.

\bibitem[Dujmovi{\'c} et~al.(2016)Dujmovi{\'c}, Joret, Kozik, and
  Wood]{dujmovic2011nonrepetitive}
Vida Dujmovi{\'c}, Gwena{\"e}l Joret, Jakub Kozik, and David~R Wood.
\newblock Nonrepetitive colouring via entropy compression.
\newblock \emph{Combinatorica}, 36\penalty0 (6):\penalty0 661--686, 2016.

\bibitem[Grytczuk()]{grytczuk2007nonrepetitive}
Jaros{\l}aw Grytczuk.
\newblock Nonrepetitive colorings of graphs a survey.
\newblock \emph{Int. J. Math. Math. Sci.}, 2007:\penalty0 Art. ID 74639.

\bibitem[Grytczuk(2007)]{grytczuk2006nonrepetitive}
Jaros{\l}aw Grytczuk.
\newblock Nonrepetitive graph coloring.
\newblock In \emph{Graph Theory in Paris}, Trends in Mathematics, pages
  209--218. Birkhauser, 2007.

\bibitem[Grytczuk et~al.(2013)Grytczuk, Kozik, and Micek]{grytczuk2013new}
Jaros{\l}aw Grytczuk, Jakub Kozik, and Piotr Micek.
\newblock New approach to nonrepetitive sequences.
\newblock \emph{Random Structures \& Algorithms}, 42\penalty0 (2):\penalty0
  214--225, 2013.

\bibitem[Harant and Jendro{\soft{l}}(2012)]{harant2012nonrepetitive}
Jochen Harant and Stanislav Jendro{\soft{l}}.
\newblock Nonrepetitive vertex colorings of graphs.
\newblock \emph{Discrete Math.}, 312\penalty0 (2):\penalty0 374--380, 2012.
\newblock \doi{10.1016/j.disc.2011.09.027}.

\bibitem[Kam{\v{c}}ev et~al.(2017)Kam{\v{c}}ev, {\L}uczak, and
  Sudakov]{kamvcev2016anagram}
Nina Kam{\v{c}}ev, Tomasz {\L}uczak, and Benny Sudakov.
\newblock Anagram-free colorings of graphs.
\newblock \emph{Combinatorics, Probability and Computing}, 2017.
\newblock \doi{10.1017/S096354831700027X}.

\bibitem[Ker{\"a}nen(1992)]{keranen1992abelian}
Veikko Ker{\"a}nen.
\newblock Abelian squares are avoidable on $4$ letters.
\newblock In \emph{Automata, languages and programming}, volume 623 of
  \emph{Lecture Notes in Comput. Sci.}, pages 41--52. Springer, 1992.
\newblock \doi{10.1007/3-540-55719-9\_62}.

\bibitem[Ker{\"a}nen(2009)]{keranen2009abelian}
Veikko Ker{\"a}nen.
\newblock A powerful abelian square-free substitution over 4 letters.
\newblock \emph{Theoret. Comput. Sci.}, 410\penalty0 (38-40):\penalty0
  3893--3900, 2009.
\newblock \doi{10.1016/j.tcs.2009.05.027}.

\bibitem[Ne{\v{s}}et{\v{r}}il et~al.(2012)Ne{\v{s}}et{\v{r}}il, de~Mendez, and
  Wood]{nevsetvril2012characterisations}
Jaroslav Ne{\v{s}}et{\v{r}}il, Patrice~Ossona de~Mendez, and David~R Wood.
\newblock Characterisations and examples of graph classes with bounded
  expansion.
\newblock \emph{European J. Combinatorics}, 33\penalty0 (3):\penalty0 350--373,
  2012.

\bibitem[Pezarski and Zmarz(2009)]{pezarski2009non}
Andrzej Pezarski and Micha{\l} Zmarz.
\newblock Non-repetitive 3-coloring of subdivided graphs.
\newblock \emph{Electron. J. Combin}, 16\penalty0 (1):\penalty0 N15, 2009.

\bibitem[Thue(1914)]{thue1914probleme}
Axel Thue.
\newblock Probleme {\"u}ber ver{\"a}nderungen von zeichenreihen nach gegebenen
  regeln.
\newblock pages I. Math. naturv. Klasse, 10. Christiana Videnskabs-Selskabs
  Skrifte, 1914.

\bibitem[Wilson and Wood(2016)]{wilson2016abelian}
Tim~E Wilson and David~R Wood.
\newblock Abelian square-free graph colouring.
\newblock \emph{arXiv preprint arXiv:1607.01117}, 2016.

\end{thebibliography}
\end{document}